\DeclareMathOperator{\arcsinh}{arcsinh}
\newcommand{\bel}[1]{\begin{equation}\label{#1}}
\newcommand{\be}{\begin{equation}}
\newcommand{\ba}{\begin{eqnarray}}
\newcommand{\ea}{\end{eqnarray}}
\newcommand{\qe}{\end{equation}}
\newcommand{\R}{{\mathbb R}}
\newcommand{\N}{{\mathbb N}}
\newcommand{\Z}{{\mathbb Z}}
\newcommand{\osc}{{\mathrm{osc}}}
\newcommand{\CDE}{{\mathrm CDE}}
\newcommand{\Hmm}[1]{\leavevmode{\marginpar{\tiny%
$\hbox to 0mm{\hspace*{-0.5mm}$\leftarrow$\hss}%
\vcenter{\vrule depth 0.1mm height 0.1mm width \the\marginparwidth}%
\hbox to
0mm{\hss$\rightarrow$\hspace*{-0.5mm}}$\\\relax\raggedright #1}}}
\theoremstyle{theorem}
\newtheorem{thm}{Theorem}[section]
\theoremstyle{example}
\newtheorem{example}{Example}[section]
\theoremstyle{corollary}
\newtheorem{coro}{Corollary}[section]
\theoremstyle{lemma}
\newtheorem{lemma}{Lemma}[section]
\theoremstyle{definition}
\newtheorem{defi}{Definition}[section]
\theoremstyle{proof}
\theoremstyle{remark}
\newtheorem{rem}{Remark}[section]
\begin{document}

\title{Davies-Gaffney-Grigor'yan Lemma on Graphs}
\author{Frank Bauer}
\email{fbauer@math.harvard.edu}
\address{Harvard University Department of Mathematics\\
One Oxford Street, Cambridge MA 02138}
\author{Bobo Hua}
\email{bobohua@fudan.edu.cn}
\address{School of Mathematical Sciences, LMNS, Fudan University, Shanghai 200433, China; Shanghai Center for Mathematical Sciences, Fudan University, Shanghai 200433, China;
Max Planck Institute for Mathematics in the Sciences, 04103 Leipzig,
Germany.}
\author{Shing-Tung Yau}
\email{yau@math.harvard.edu}
\address{Harvard University Department of Mathematics\\
One Oxford Street, Cambridge MA 02138}

\thanks{FB was partially supported by the Alexander von Humboldt foundation and partially supported by the NSF Grant DMS-0804454 Differential Equations in Geometry. FB and STY acknowledge support by the University of Pennsylvania/Air Force Office of Scientific Research grant "Geometry and Topology of Complex Networks", 
Award \#561009/FA9550-13-1-0097. BH was supported by NSFC, grant no. 11401106.}
\begin{abstract}We prove a variant of the Davies-Gaffney-Grigor'yan Lemma for the continuous time heat kernel on graphs. We use it together with the Li-Yau inequality, to obtain strong heat kernel estimates for graphs satisfying the exponential curvature dimension inequality.
\end{abstract}
\maketitle

\section{Introduction and main results}
\subsection{Introduction}
The Davies-Gaffney-Grigor'yan Lemma (DGG Lemma for short) on manifolds can be stated in the form

\begin{lemma}[Davies-Gaffney-Grigor'yan]\label{DGGManifold}
Let $M$ be a complete Riemannian manifold and $p_t(x,y)$ the minimal heat kernel on $M$. For any two measurable  subsets $B_1$ and $B_2$ of $M$ and $t>0,$ we have
\begin{equation}\label{e:DGG Riemannian}\int_{B_1}\int_{B_2} p_t(x,y)d\mathrm{vol}(x)d\mathrm{vol}(y) \leq \sqrt{\mathrm{vol}(B_1)\mathrm{vol}(B_2)}\exp\left(-\mu t\right)\exp\left(- \frac{d^2(B_1,B_2)}{4t}\right),\end{equation}
where $\mu$ is the greatest lower bound of the $L^2$-spectrum of the Laplacian on $M$ and $d(B_1,B_2)=\inf_{x_1\in B_1,x_2\in B_2}d(x_1,x_2)$ the distance between $B_1$ and $B_2$.
\end{lemma}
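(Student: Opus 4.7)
The plan is to combine two ingredients into a single operator estimate: the $L^2$-spectral gap $\|P_t\|_{L^2\to L^2}\leq e^{-\mu t}$, where $P_t=e^{-tL}$ is the heat semigroup and $L=-\Delta$ denotes the nonnegative Laplace--Beltrami operator, together with Davies' exponential perturbation trick, which is what produces the Gaussian factor $e^{-d^2(B_1,B_2)/(4t)}$.

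First I would rewrite the double integral as a bilinear form, using self-adjointness of $P_t$:
\[
\int_{B_1}\!\!\int_{B_2} p_t(x,y)\,d\vol(x)\,d\vol(y)=\langle \mathbf{1}_{B_1},\,P_t\mathbf{1}_{B_2}\rangle.
\]
Next, for a $1$-Lipschitz function $\psi$ and a real parameter $\alpha$, I would consider the conjugated semigroup $P_t^{\alpha}:=e^{\alpha\psi}P_t e^{-\alpha\psi}$, generated by $L^{\alpha}:=e^{\alpha\psi}L e^{-\alpha\psi}$. A direct computation gives
\[
L^{\alpha} f = Lf + 2\alpha \nabla\psi\cdot\nabla f - (\alpha^2|\nabla\psi|^2-\alpha\Delta\psi)f,
\]
so that the symmetric part of $L^{\alpha}$ equals $L-\alpha^2|\nabla\psi|^2\geq L-\alpha^2$. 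Combined with $\langle Lf,f\rangle\geq\mu\|f\|_2^2$, this places the numerical range of $L^{\alpha}$ in $[\mu-\alpha^2,\infty)$, and Lumer--Phillips then yields the key operator bound
\[
\|P_t^{\alpha}\|_{L^2\to L^2}\leq e^{-(\mu-\alpha^2)t}.
\]

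Now I would take $\psi(x):=d(x,B_2)$, which is $1$-Lipschitz, vanishes on $B_2$, and satisfies $\psi\geq d(B_1,B_2)$ on $B_1$. Inserting $e^{-\alpha\psi}e^{\alpha\psi}=1$ on either side of $P_t$ gives
\[
\langle \mathbf{1}_{B_1},\,P_t\mathbf{1}_{B_2}\rangle=\langle e^{-\alpha\psi}\mathbf{1}_{B_1},\,P_t^{\alpha}(e^{\alpha\psi}\mathbf{1}_{B_2})\rangle,
\]
and Cauchy--Schwarz combined with the above operator bound yields
\[
\langle \mathbf{1}_{B_1},\,P_t\mathbf{1}_{B_2}\rangle\leq \sqrt{\vol(B_1)\vol(B_2)}\,e^{-\mu t+\alpha^2 t-\alpha d(B_1,B_2)},
\]
since $\|e^{\alpha\psi}\mathbf{1}_{B_2}\|_2=\sqrt{\vol(B_2)}$ and $\|e^{-\alpha\psi}\mathbf{1}_{B_1}\|_2\leq e^{-\alpha d(B_1,B_2)}\sqrt{\vol(B_1)}$ for $\alpha>0$. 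Minimizing the exponent at $\alpha=d(B_1,B_2)/(2t)$ produces the claimed inequality, and the factor $1/4$ in the Gaussian exponent is forced by this optimization.

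The main obstacle is passing from the formal numerical-range computation to an honest operator bound on $P_t^{\alpha}$: because the distance function $d(\cdot,B_2)$ is in general unbounded on a noncompact complete manifold, one must either cut it off (replace $\psi$ by $\psi\wedge N$) and pass to the limit $N\to\infty$, or verify directly that the conjugated semigroup is well-defined on $L^2$ so that Lumer--Phillips applies. Once that technicality is settled, bundling the spectral bound and the Davies perturbation into the single operator inequality above is what makes both $e^{-\mu t}$ and $e^{-d^2(B_1,B_2)/(4t)}$ appear simultaneously rather than requiring us to pay for one by giving up the other.
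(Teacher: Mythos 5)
Your proof is correct. Rewriting the double integral as $\langle\mathds{1}_{B_1},P_t\mathds{1}_{B_2}\rangle$ and twisting by $e^{\alpha\psi}$ with $\psi=d(\cdot,B_2)$ is Davies' exponential perturbation method, and your computation of the symmetric part of $L^\alpha$ as $L-\alpha^2|\nabla\psi|^2$ is right; feeding $\langle Lf,f\rangle\geq\mu\|f\|_2^2$ into that lower bound is what produces both exponential factors at once, and the optimization at $\alpha=d(B_1,B_2)/(2t)$ yields the $\tfrac14$. The technicality you flag---cutting off the unbounded distance function so that $e^{\pm\alpha\psi}$ are bounded, so the conjugated semigroup is well-defined and the Lumer--Phillips quasi-dissipativity bound applies, then passing to the limit---is the right one and is resolved exactly as you describe.

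Note, however, that the paper does not actually prove Lemma~\ref{DGGManifold}; it is stated as background and attributed to Davies, Gaffney, Li--Yau, and Grigor'yan, with the $e^{-\mu t}$ factor credited specifically to Grigor'yan. What the paper proves is the graph analogue, Theorem~\ref{thm:Davies}, and the mechanism there is the integral maximum principle (Lemmas~\ref{l:monotonicity Delmotte finite} and~\ref{l:monotonicity Delmotte}): one exhibits a time-dependent weight $K(t,x)$ for which $e^{2(1-\gamma)\mu t}\sum_x K(t,x)u^2(t,x)m(x)$ is nonincreasing, with $K$ built from the Legendre-type function $\zeta$ rather than $e^{d^2/2t}$. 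The two methods are cousins---taking $K(t,x)=e^{2\alpha\psi(x)-2\alpha^2 t}$ in the monotonicity lemma and optimizing over $\alpha$ at the end reproduces your bound on a manifold---but they package the work differently. The monotonicity formulation is the one that survives the passage to graphs, where the missing chain rule forces the non-Gaussian weight $\zeta$ and the rescaled, shifted time argument; the conjugation method is more economical on manifolds because the symmetric part of $L^\alpha$ can be read off in one line. Both routes deliver $e^{-\mu t}$ and the Gaussian factor simultaneously; the difference is in the formalism, not the content.
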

A lemma of this type appeared for the first time in a paper of Davies \cite{Davies92}, see also Li and Yau's paper \cite{LiYau86} for an earlier version of this lemma. However Davies mentions that the idea goes back to Gaffney \cite{Gaffney59}. Later the lemma was improved by Grigor'yan \cite{Grigoryan94} who introduced the term $\exp(-\mu t)$ on the right hand side. If $\mu>0$ (for instance for Hyperbolic spaces with constant negative sectional curvature) the term $\exp(-\mu t)$ is particularly important since it gives asymptotically the correct speed of decay of the heat kernel.

The DGG Lemma on Riemannian manifolds is of fundamental importance. Because of its generality (note that no assumptions on the geometry of the manifold are made in Lemma \ref{DGGManifold}), it can be applied in many different situations. Among other applications it was used to obtain eigenvalue estimates \cite{Chung-Grigoryan-Yau96} and in combination with the Li-Yau inequality it yields strong heat kernel estimates \cite{LiYau86, Li12}.

In view of its importance, the question is whether one can prove the DGG Lemma for graphs. The answer to this question is negative. Indeed, it was shown by Coulhon and Sikora \cite{Coulhon08} in a very general setting that for nonnegative self-adjoint operators on general metric measure spaces the DGG Lemma is equivalent to the finite propagation speed property of the wave equation. In particular, the results in \cite{Coulhon08} can be applied in the graph setting. However, it is well-known that for graphs the wave equation does not have the finite propagation speed property, see Friedman-Tillich \cite[pp.249]{FriedmanTillich04}.

The main contribution of this paper is that, despite this negative answer, we are surprisingly  able to prove a variant of the DGG Lemma for the continuous time heat kernel on graphs that approximates the DGG Lemma on manifolds if the time $t$ is big compared to the distance $d$. Moreover we demonstrate the power of the DGG Lemma by obtaining novel heat kernel and eigenvalue estimates.
\subsection{Main results and organization of the paper}
In the following we state and discuss the main results of our paper in detail. For the precise definitions of the quantities used we refer to Section \ref{Notation} and Section \ref{Section Li-Yau}. Our main result is:

\begin{thm}[Davies-Gaffney-Grigor'yan Lemma on graphs]\label{thm:Davies}
Let $G$ be an infinite graph equipped with a measure $m$ and $p_t(x,y)$ be the minimal heat
kernel of $G.$ For any $0<\gamma< 1$ there exists a constant  $\alpha(\gamma)\geq 1$ such that for any subsets $B_1,B_2\subset G$ and $t\geq 0,$
\begin{eqnarray}\label{e:Davies estimate}
\sum_{x\in B_1}\sum_{y\in B_2}p_t(x,y)m(x)m(y)\nonumber&\leq &
\sqrt{m(B_1)m(B_2)}e^{-(1-\gamma)\mu t} \\&&\times \exp\left(-\zeta(\alpha D_m t+1,d(B_1,B_2))\right),
\end{eqnarray} where $\mu$ is the greatest lower bound of the $\ell^2$-spectrum of the graph Laplacian, $d(B_1,B_2)$ is the distance between $B_1$ and $B_2$ and $\zeta(t,d)=d\arcsinh\left(\frac{d}{t}\right)-\sqrt{d^2+t^2}+t$.
Moreover, for the case $\gamma=1,$ we have
\begin{equation}\label{e: Davies gamma=1}
\sum_{x\in B_1}\sum_{y\in B_2}p_t(x,y)m(x)m(y)\leq
\sqrt{m(B_1)m(B_2)}\exp\left(-\frac12\zeta(D_mt,d(B_1,B_2))\right).
\end{equation}
\end{thm}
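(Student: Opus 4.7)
The plan is to adapt Davies' semigroup-perturbation method to the discrete setting. Write the left-hand side as $\langle P_t\mathbf{1}_{B_1},\mathbf{1}_{B_2}\rangle_m$ and introduce a weight $\psi\colon V\to\R$ through the operator identity
\begin{equation*}
\langle P_t f,g\rangle_m = \langle (e^{\psi}P_t e^{-\psi})(e^{\psi}f),\,e^{-\psi}g\rangle_m,
\end{equation*}
so that Cauchy--Schwarz reduces matters to (i) bounding the conjugated semigroup norm $\|e^{\psi}P_t e^{-\psi}\|_{2\to 2}$ and (ii) bounding the two weighted norms $\|e^{\psi}\mathbf{1}_{B_1}\|_m$, $\|e^{-\psi}\mathbf{1}_{B_2}\|_m$. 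The natural choice $\psi(x)=\alpha\,d(x,B_1)$ vanishes on $B_1$ and is at least $\alpha\,d(B_1,B_2)$ on $B_2$, delivering the spatial factor $\sqrt{m(B_1)m(B_2)}\,e^{-\alpha d(B_1,B_2)}$ for free.

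For the semigroup bound, let $L=-\Delta$ be the positive Laplacian and $L_\psi = e^{\psi}Le^{-\psi}$, so that $e^{\psi}P_t e^{-\psi}=e^{-tL_\psi}$. Expanding $\mathrm{Re}\langle L_\psi h,h\rangle_m$ and symmetrizing in $(x,y)$ via $w_{xy}=w_{yx}$ gives
\begin{equation*}
\mathrm{Re}\langle L_\psi h,h\rangle_m = \tfrac{1}{2}\sum_{x,y}w_{xy}\bigl(|h(x)|^2+|h(y)|^2\bigr)-\sum_{x,y}w_{xy}\cosh(\psi(x)-\psi(y))\,\mathrm{Re}(h(x)\overline{h(y)}).
\end{equation*}
Splitting $\cosh=1+(\cosh-1)$ separates a Dirichlet-energy piece (bounded below by $\mu\|h\|_m^2$ by the spectral gap) from a perturbation controlled, through $|\mathrm{Re}(h(x)\overline{h(y)})|\leq\tfrac{1}{2}(|h(x)|^2+|h(y)|^2)$, by $\Lambda(\psi)\|h\|_m^2$ where $\Lambda(\psi)=\sup_x m(x)^{-1}\sum_y w_{xy}[\cosh(\psi(x)-\psi(y))-1]$. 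Standard semigroup theory then yields $\|e^{\psi}P_t e^{-\psi}\|_{2\to 2}\leq \exp((\Lambda(\psi)-\mu)t)$. For $\psi(x)=\alpha\,d(x,B_1)$ the triangle inequality gives $|\psi(x)-\psi(y)|\leq\alpha$ on every edge, so $\Lambda(\psi)\leq D_m(\cosh\alpha-1)$. The total exponent $D_m t(\cosh\alpha-1)-\mu t-\alpha d(B_1,B_2)$ is minimized at $\alpha=\arcsinh(d(B_1,B_2)/(D_m t))$, giving exactly $-\mu t-\zeta(D_m t,d(B_1,B_2))$; taking $\alpha$ slightly suboptimal and absorbing the deficit into the factor $(1-\gamma)$ in front of $\mu t$, then regularizing the small-$t$ regime via the additive $+1$ inside $\zeta$, produces \eqref{e:Davies estimate}.

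The principal obstacle is the endpoint $\gamma=1$, which cannot be recovered by sending $\gamma\to 1$ in the preceding optimization (the spectral gap was what prevented the Cauchy--Schwarz step from losing a factor on $\zeta$). Instead, I would use a weighted $\ell^2$-monotonicity argument, defining $F(t)=\sum_x (P_t\mathbf{1}_{B_1})^2(x)e^{-\phi(x,t)}m(x)$ for a time-dependent weight $\phi$, differentiating in $t$, and using the algebraic identity
\begin{equation*}
(a-b)(ae^{-p}-be^{-q}) = (ae^{-p/2}-be^{-q/2})^2 - 4ab\,e^{-(p+q)/2}\sinh^2\!\bigl((p-q)/4\bigr),
\end{equation*}
which is the discrete substitute for the manifold chain rule, to decompose $F'(t)$ into a non-positive quadratic form plus a $\sinh$-type residue. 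Choosing $\phi$ to satisfy a discrete Hamilton--Jacobi inequality along every edge kills the residue and yields $F'(t)\leq 0$; Cauchy--Schwarz $\sum_{B_2}P_t\mathbf{1}_{B_1}\cdot m\leq F(t)^{1/2}\bigl(\sum_{B_2}e^{\phi(\cdot,t)}m\bigr)^{1/2}$ then delivers the factor $\tfrac{1}{2}$ visible in \eqref{e: Davies gamma=1}. The delicate step is constructing $\phi$ that is simultaneously compatible with the sinh-type Hamilton--Jacobi inequality on every edge and satisfies $\phi(x,t)\leq -\zeta(D_m t, d(x,B_1))$ on $B_2$; the $\arcsinh$ profile of $\zeta$ is precisely the Legendre dual of the $\cosh$ nonlinearity forced by the graph Laplacian's action on exponentials, explaining the shape of the estimate.
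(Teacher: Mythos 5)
Your proof of \eqref{e:Davies estimate} takes a genuinely different route from the paper's. The paper proceeds through a new graph integral maximum principle (Lemma~\ref{l:monotonicity Delmotte}): one defines $I(t)=\sum_x K(t,x)u^2(t,x)m(x)$ with a time-dependent weight $K(t,x)=e^{2\zeta(\alpha D_m t+\frac12,d(x,B))}$, differentiates, and needs the weight to satisfy the pointwise edge-inequality \eqref{e:K(t,x)}. The $e^{-(1-\gamma)\mu t}$ factor is extracted by bleeding a fraction $1-\gamma$ of the Dirichlet energy into the Rayleigh quotient, and the remaining $\gamma$-fraction is what absorbs the $\cosh$-perturbation; the construction breaks at $\gamma=0$ because of the boundary edge $d(y)=0$, $d(x)=1$ (Case~2 of Lemma~\ref{l:1-Lipschitz}), forcing the rescaling $\alpha\geq 1$ and the time shift $+1$. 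You instead use a time-\emph{independent} exponential weight $\psi=\alpha\,d(\cdot,B_1)$ and bound the conjugated semigroup norm $\|e^{\psi}P_te^{-\psi}\|_{2\to 2}$ directly. Your dissipativity computation
\begin{equation*}
\mathrm{Re}\langle L_\psi h,h\rangle_m=\tfrac12\sum_{x,y}\mu_{xy}|h(x)-h(y)|^2-\sum_{x,y}\mu_{xy}\bigl(\cosh(\psi(x)-\psi(y))-1\bigr)\mathrm{Re}(h(x)\overline{h(y)})
\end{equation*}
is correct, and bounding the first term below by $\mu\|h\|_m^2$ and the second above by $D_m(\cosh\alpha-1)\|h\|_m^2$ gives $\|e^{\psi}P_te^{-\psi}\|_{2\to 2}\leq e^{(D_m(\cosh\alpha-1)-\mu)t}$; optimizing $\alpha$ then yields $\sqrt{m(B_1)m(B_2)}\,e^{-\mu t}\exp(-\zeta(D_m t,d(B_1,B_2)))$. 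Note this is \emph{stronger} than the theorem: you obtain $\gamma=0$ with the clean $\zeta(D_m t,d)$, and since $\mu\geq 0$ and $\zeta\geq 0$, this single inequality already implies both \eqref{e:Davies estimate} (for every $\gamma\in[0,1)$) and \eqref{e: Davies gamma=1}. There is no need to take $\alpha$ suboptimal or to insert the $+1$ time shift, and the entire separate $\gamma=1$ discussion in your last paragraph is superfluous. (That final paragraph, if it were needed, is also only an outline: the algebraic $\sinh$-identity you state is correct, but you leave the construction of the weight $\phi$ to the reader, and that construction is precisely the content of the paper's Lemma~\ref{l:1-Lipschitz} where the real work lies.)

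Two points deserve care before declaring victory, though neither appears fatal. First, on an infinite graph $e^{\psi}$ is an unbounded multiplication operator, so the identity $e^{\psi}P_te^{-\psi}=e^{-tL_\psi}$ and the Lumer--Phillips-type semigroup bound should be established on each finite subgraph of an exhaustion (where $\mu$ is replaced by the Dirichlet eigenvalue $\mu_1(\Omega_i)$) and then passed to the limit, exactly as the paper does in Lemma~\ref{l:monotonicity Delmotte}; with $D_m<\infty$ the Laplacian is bounded and this is routine, but it should be said. Second, your result contradicts the spirit of the paper's Remark~1.2(b), which asserts that the paper's strategy cannot reach $\gamma=0$; your strategy is not the paper's strategy, so there is no formal contradiction, but the discrepancy is striking enough that you should double-check the dissipativity estimate once more. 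If it holds up, the Davies conjugation argument gives a cleaner and sharper DGG lemma than the integral maximum principle approach, at the cost of not producing the $\ell^2$-monotonicity statement (Lemma~\ref{l:monotonicity Delmotte}) which the paper also wants as a standalone tool.
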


\begin{rem}
\begin{enumerate}[(a)]
\item The function $\zeta$ in Theorem \ref{thm:Davies} is defined as a Legendre associate and appears naturally in the graph setting, see for example \cite{Davies93, Pang93, Delmotte99}. In view of Lemma \ref{DGGManifold}, $\zeta$ should be comparable to $d^2/2t$. It is not difficult to see that for small $t/d$  the estimates, \eqref{e:Davies estimate} and \eqref{e: Davies gamma=1}, are not true if one replaces $\zeta$ by $\frac{d^2}{2t}$, see \cite{Pang93}. However, for large $t/d$ one can show that $\zeta$ behaves like $\frac{d^2}{2t}$, see \eqref{e:zeta estimate}.

\item
Compared to the Riemannian case, Lemma \ref{DGGManifold}, it would be desirable to prove Theorem \ref{thm:Davies} for  $\gamma=0.$ However on graphs we cannot obtain this result since, for $\gamma=0$, our strategy to find a nontrivial solution for \eqref{e:K(t,x)} in the integral maximum principle (Lemma \ref{l:monotonicity Delmotte}) breaks down. Nevertheless, we can recover the part of the exponential factor (up to a parameter $\gamma\in(0,1]$) which is nontrivial in applications.

\item In the special case $\gamma=1$, the theorem can be directly derived from the results in Delmotte \cite{Delmotte99}. However, it is important to obtain the the exponential factor in $\mu$ on the right hand side.  In this  case, i.e. $0<\gamma<1,$ one cannot use the results of Delmotte and a more delicate argument is needed. We prove a new variant of integral maximum principle on graphs, Lemma \ref{l:monotonicity Delmotte}, that involves the exponential factor in $\mu$. Moreover, we construct nontrivial solutions which satisfy the condition \eqref{e:K(t,x)} in the new integral maximum principle, see Lemma \ref{l:1-Lipschitz}. This is non-trivial for $0<\gamma<1,$ and we need to rescale and shift the time and make use of the crucial fact that on graphs the combinatorial distance function can only attain integer values.
\item Discrete time versions of the integral maximum principle and the DGG Lemma for $\gamma = 1$ were proved in \cite{Coulhon05}.
\end{enumerate}
\end{rem}

In \cite{LiYau86}, Li and Yau obtained their famous heat kernel estimates for mainfolds with Ricci curvature bounded from below by $-K$ for some $K\geq 0$. It was open for a long time whether similar heat kernel estimates hold on graphs. One particular problem was that, on graphs, it is not apparent which the right notion of Ricci curvature is. Here we solve this open problem and prove Li and Yau's heat kernel estimates for graphs satisfying the exponential curvature dimension inequality on graphs which was introduced in \cite{Bauer13}. In the proof of the heat kernel estimate we combine the Harnack inequality, which follows from the Li-Yau inequality, with the DGG Lemma (Theorem \ref{thm:Davies}).

\begin{thm}\label{thm:heat kernel estimate} Let $\epsilon>0, 0<\gamma\leq 1, \beta>0$ and $p_t(x,y)$ be the minimal heat kernel of $G$.  If $G$ satisfies the curvature dimension inequality $\CDE (n,-K),$ then there exist constants $C_1(n,\epsilon,\beta, \gamma, D_m, m_\mathrm{max},\mu_{\mathrm{min}})$, $C_2(n,\epsilon,\beta, \gamma, D_m, m_\mathrm{max},\mu_{\mathrm{min}})$ and $C_3(\gamma,\beta,D_m)$ such that
$$p_t(x,y)\leq C_1\frac{\exp(-(1-\gamma)\mu t)}{\sqrt{m (B_x(\sqrt t))m (B_y(\sqrt t))}}\exp\left(-\frac{C_3d^2(x,y)}{4(1+2\epsilon)t}+C_2\sqrt{Knt}\right),$$ for any $x,y\in G$ and $t\geq \beta d(x,y)\vee1$.
\end{thm}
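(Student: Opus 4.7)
The plan is to follow the classical Li-Yau strategy on manifolds, transposed to graphs: combine the parabolic Harnack inequality coming from the exponential curvature-dimension condition $\CDE(n,-K)$ with the Davies-Gaffney-Grigor'yan bound of Theorem~\ref{thm:Davies}. The Harnack inequality will transfer the pointwise value $p_t(x,y)$ into a double sum of $p_{t'}(z_1,z_2)$ over $z_1\in B_x(\sqrt t)$, $z_2\in B_y(\sqrt t)$ at a slightly later time $t'=t+2s$, and the DGG Lemma will supply the decay in both $\mu$ and $d(x,y)$.

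First I would fix $x,y\in G$ and set $r=\sqrt t$. Since $z\mapsto p_s(x,z)$ is a positive solution of the heat equation on $G$, the parabolic Harnack inequality derived from the Li-Yau gradient estimate under $\CDE(n,-K)$ (Section~\ref{Section Li-Yau}) yields, for every $z_2\in B_y(r)$ and every time-shift $s>0$,
\[p_t(x,y)\ \le\ p_{t+s}(x,z_2)\,\exp\!\bigl(c_1 r^2/s+c_2 s Kn+c_3\log((t+s)/t)\bigr).\]
Choosing $s=\epsilon t$ turns $r^2/s$ into a constant and, after the standard optimisation, produces the curvature contribution $C_2\sqrt{Knt}$. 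Applying the same estimate in the first slot, using symmetry $p_\tau(x,z_1)=p_\tau(z_1,x)$ on $\ell^2(G,m)$, one obtains
\[p_t(x,y)\ \le\ H\,p_{t+2s}(z_1,z_2)\qquad\text{for all }z_1\in B_x(r),\ z_2\in B_y(r),\]
where $H=C_1'\exp(C_2\sqrt{Knt})$ and $C_1',C_2$ depend only on $n,\epsilon,\beta,\gamma,D_m,m_{\max},\mu_{\min}$.

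Averaging this pointwise inequality over $z_1\in B_x(r)$ and $z_2\in B_y(r)$ against $m\otimes m$ and then applying Theorem~\ref{thm:Davies} with $B_1=B_x(r)$, $B_2=B_y(r)$ at time $t+2s$ gives
\[p_t(x,y)\,m(B_x(r))\,m(B_y(r))\ \le\ H\sqrt{m(B_x(r))\,m(B_y(r))}\,e^{-(1-\gamma)\mu(t+2s)}e^{-\zeta(\alpha D_m(t+2s)+1,\,d_*)},\]
with $d_*:=d(B_x(r),B_y(r))$. Since $e^{-(1-\gamma)\mu\cdot 2s}\le 1$, dividing by $\sqrt{m(B_x(r))m(B_y(r))}$ already delivers the desired prefactor $e^{-(1-\gamma)\mu t}/\sqrt{m(B_x(r))m(B_y(r))}$.

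It then remains to translate $\zeta(\alpha D_m(t+2s)+1,d_*)$ into the Gaussian exponent $C_3 d^2(x,y)/(4(1+2\epsilon)t)$. One has $d_*\ge d(x,y)-2\sqrt t$; if $2\sqrt t\ge d(x,y)$ then $d_*=0$ and there is nothing to prove because the Gaussian factor is then $\le 1$ and $d^2/t$ is automatically of order one, otherwise the hypothesis $t\ge\beta d(x,y)\vee 1$ together with $(1-2\sqrt t/d(x,y))^2\ge 1-4\sqrt t/d(x,y)$ makes $d_*$ comparable to $d(x,y)$ up to a factor that is as close to one as we wish by enlarging $d(x,y)$. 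Combining this with the quantitative lower bound $\zeta(\tau,d)\ge d^2/\bigl(2\tau(1+\epsilon)\bigr)$ valid for $\tau/d$ bounded below (formula~\eqref{e:zeta estimate}), applied with $\tau=\alpha D_m(t+2s)+1$, yields $\zeta\ge C_3 d^2(x,y)/\bigl(4(1+2\epsilon)t\bigr)$ for some $C_3=C_3(\gamma,\beta,D_m)$. The main obstacle I expect is the simultaneous tuning of the Harnack shift $s=\epsilon t$, the ball radius $r=\sqrt t$, and the DGG time $t+2s$, so that (i) the Harnack Gaussian penalty $r^2/s$ merges with the $\epsilon$-loss in the Davies-Gaffney exponent without spoiling the coefficient $1/\bigl(4(1+2\epsilon)\bigr)$, and (ii) the factor $e^{-(1-\gamma)\mu t}$ survives with the exact value of $\gamma$ prescribed in the statement; finite-$d(x,y)$ exceptional cases are absorbed into the constant $C_1$.
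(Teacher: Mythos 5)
Your overall strategy is the same as the paper's: apply the $\CDE(n,-K)$ Harnack inequality twice to move $p_t(x,y)$ into the double sum $\sum_{x'\in B_x(\sqrt t)}\sum_{y'\in B_y(\sqrt t)} p_{T}(x',y')m(x')m(y')$ at a slightly later time $T$, then feed this into the DGG bound (Corollary~\ref{c:Davies lemma}) with $B_1=B_x(\sqrt t)$, $B_2=B_y(\sqrt t)$, and finally replace $d(B_x(\sqrt t),B_y(\sqrt t))$ by $d(x,y)-2\sqrt t$. All of this matches the paper's argument, and the reduction is correct.

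The genuine gap is in the tuning of the Harnack time shift, which is precisely the step you flag as ``the main obstacle'' but do not resolve. You fix $s=\epsilon t$ with the \emph{same} $\epsilon$ that appears in the statement's coefficient $1/(4(1+2\epsilon))$; but with a multiplicative shift proportional to $t$, the curvature penalty from the Harnack inequality is $\tfrac{Kn}{\rho}(T_2-T_1)\sim \epsilon Knt$, which is linear in $t$, not the claimed $\sqrt{Knt}$. To obtain $\sqrt{Knt}$ one must instead take the shift $\delta$ in $T_2=(1+\delta)t$ of order $\epsilon/\sqrt{Knt}$ (so that the Gaussian-Harnack term $\sim 1/\delta$ and the curvature term $\sim\delta Knt$ balance, both giving $\sqrt{Knt}$), and this forces a case distinction: when $Knt\gtrsim 1$, set $2\delta=\epsilon/\sqrt{C_5 t}$ so that also $4\delta\le 2\epsilon$ and the DGG Gaussian coefficient stays $\ge 1/(4(1+2\epsilon))$; when $Knt\lesssim 1$, fall back to $2\delta=\epsilon$ and absorb the now-bounded curvature term into the constant. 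Your write-up conflates the internal tuning parameter with the statement's $\epsilon$ and omits this two-case analysis, so as written the argument only proves the weaker bound $\exp(CKnt)$ in place of $\exp(C_2\sqrt{Knt})$.
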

In Theorem \ref{thm:heat kernel estimate}, we only assume the exponential curvature dimension inequality. Delmotte \cite{Delmotte99} proved the special case ($K=0$ and $\gamma=1$) of the heat kernel estimate in Theorem \ref{thm:heat kernel estimate} by assuming the volume doubling property and the Poincar\'e inequality. In contrast to the volume doubling property and the Poincar\'e inequality, the exponential curvature dimension inequality $CDE(n,-K)$ is a local condition. The advantage is that the exponential curvature dimension inequality can more easily be verified at the cost of being less robust to local perturbations. On Riemannian manifolds it is well known that nonnegative Ricci curvature implies the volume doubling property and the Poincar\'e inequality. However on graphs it is still an open problem weather $CDE(n,0)$ implies these properties.

The paper is organized as follows. In Section \ref{Section Li-Yau} we review the Li-Yau inequality on graphs introduced in \cite{Bauer13} and derive some interesting corollaries of it. In particular, we prove Yau's Liouville theorem, Cheng's Liouville theorem and Cheng's eigenvalue estimate on graphs. While Yau's Liouville theorem was already known under slightly different assumptions, Cheng's Liouville theorem seems to be only known in very special cases (for instance for lattices or Cayley graphs \cite{Hebisch93}). In Section \ref{Section DGG} we prove the DGG Lemma by establishing our main tool the integral maximum principle. In Section \ref{Section Applications DGG} we use the DGG Lemma to prove the heat kernel estimates of the Li-Yau type and as a corollary we derive new heat kernel estimates for finite graphs. Moreover we show how the DGG Lemma can be used to give a purely discrete proof of higher order eigenvalue estimates in terms of the distances between subsets of a finite graph.

\subsection{Setting}\label{Notation}
In this subsection we introduce the setting used throughout this paper. Let $G=G(V,E)$ be a locally finite, connected graph with vertex set $V$ and edge set $E$.  We consider a symmetric weight function $\mu:V\times V\to [0,\infty)$ that satisfies $\mu_{xy}>0$ if and only if $x$ and $y$ are neighbors, in symbols $x\sim y$. Moreover we assume that this weight function satisfies $$\mu_{\mathrm{min}}:= \inf_{(x,y)\in E}\mu_{xy} >0$$ and
$$\deg(x):=\sum_{y\in V}\mu_{xy}<\infty$$ for all $x\in V$.

Let $m: V \to \mathbb{R}_+$ be an arbitrary measure on the vertex set $V$ and let $m_{\mathrm{max}}:= \sup_{x\in V}m(x)$ and  $m_{\mathrm{min}}:= \inf_{x\in V}m(x)$ .  We denote by $C(V)$ the space of real functions on $V,$  by $\ell^p(V,m)=\{f\in C(V): \sum_{x\in V}|f(x)|^pm(x)<\infty\},$ $1\leq p<\infty$, the space of $\ell^p$ integrable functions on $V$ with respect to the measure $m$ (For $p=\infty,$ $\ell^{\infty}(V,m)=\{f\in C(V):\sup_{x\in V}|f(x)|<\infty\}$). For the Hilbert space $\ell^2(V,m),$ we write the inner product as $(f,g)_{\ell^2(V,m)}=\sum_{x\in V} f(x)g(x)m(x).$ We define the Laplace operator $\Delta: C(V) \to C(V)$ with respect to $m$ pointwise by
\[
\Delta f (x) = \frac{1}{m(x)} \sum_{y\in V} \mu_{xy}(f(y) - f(x)) \quad \forall\ x\in V,
\]which coincides with the generator of the Dirichlet form $$f\mapsto \frac12\sum_{x,y\in V}\mu_{xy}|f(x)-f(y)|^2$$ with respect to $\ell^2(V,m)$ on its domain, see Keller-Lenz \cite{KellerLenz12}.
The two most natural choices are $m(x) =\deg(x)$ for all $x\in V$ and $m \equiv 1$. In the first case we obtain the normalized Laplace operator and in the second case the combinatorial Laplace operator, respectively.
It will be useful to define:
\begin{equation}\label{a:on Dm}
D_\mu = \max_{x,y \in V, (x,y)\in E}\frac{\deg(x)}{\mu_{xy}} \text{ and }D_m = \max_{x \in V} \frac{\deg(x)}{m(x)}.
\end{equation}

\section{The Li-Yau inequality on graphs}\label{Section Li-Yau}
In 1975, Yau \cite{Yau75} proved a Liouville type theorem for positive harmonic functions on Riemannian manifolds with nonnegative Ricci curvature and together with Cheng, Yau \cite{ChengYau75} used Bochner's technique to derive the gradient estimate for positive harmonic functions on such manifolds, which yields Cheng's Liouville theorem on sublinear growth harmonic functions. Later on, Li and Yau \cite{LiYau86} derived the parabolic gradient estimate for positive solutions to the heat equations, the so-called Li-Yau inequality. In general, gradient estimates proved to be one of the most powerful tools in geometric analysis. For instance they played a key role in the proof of the Poincar\'e conjecture.

It was open for a long time to prove a Li-Yau inequality on graphs. The two main obstacles were that firstly the chain rule is not available on graphs and secondly it is non-trivial to find the right notion of curvature in the discrete setting. Recently progress was made and a Li-Yau inequality and the corresponding Harnack inequality on graphs were obtained in \cite{Bauer13} by introducing the so-called exponential curvature dimension inequality.
\subsection{The exponential curvature dimension inequality}
Following the work of Bakry and Emery \cite{Bakry85}, there are two natural bilinear forms associated to the Laplacian.
\begin{defi}
The gradient form $\Gamma$ is defined by
\begin{align*}
\Gamma(f,g)(x) & = \frac{1}{2}\big(\Delta(fg) - f \Delta(g) - \Delta(f)g\big)(x) \\ &= \frac{1}{2m(x)} \sum_{y\in V} \mu_{xy} (f(y) - f(x))(g(y)-g(x)).
\end{align*}

The iterated gradient form is defined by
\[
\Gamma_2(f,g)(x) = \frac{1}{2}(\Delta \Gamma(f,g)  - \Gamma(f, \Delta g) - \Gamma(\Delta f, g))(x),
\]
For simplicity, we write $\Gamma(f) = \Gamma(f,f)$ and $\Gamma_2(f) = \Gamma_2(f,f)$.
\end{defi}
Using these bilinear forms one can define the curvature dimension inequality.
\begin{defi}
A graph $G$ satisfies the curvature dimension inequality $CD(n,K)$ if, for any function $f$

\[
\Gamma_2 (f) \geq \frac{1}{n} (\Delta f)^2 + K\Gamma(f). \]
Moreover, $G$ satisfies $CD(\infty,K)$ if
\[
\Gamma_2(f) \geq K\Gamma(f).
\]
\end{defi}
In the case of an $n$-dimensional Riemannian manifold whose Ricci curvature is bounded from below by $K$ the  curvature dimension inequality is a direct consequence of Bochner's identity. Even in more general settings where Bochner's identity is not available, the curvature dimension inequality has proven to be an important definition of curvature \cite{Bakry06, Lin10}.

However there are some problems with the curvature dimension inequality when one wants to prove the Li-Yau inequality for graphs. Indeed it turns out that a natural modification of the curvature dimension inequality is needed in order to prove the Li-Yau inequality.

\begin{defi}
A graph $G$ satisfies the  exponential curvature dimension inequality $CDE(n,K)$ if  for any vertex $x \in V$ and any positive function $f : V\to \R$ such that $\Delta f(x) < 0$ we have
\[  \Gamma_2(f)  - \Gamma\left(f, \frac{\Gamma(f)}{f}\right) \geq \frac{1}{n} (\Delta f)^2 + K \Gamma(f).\]

Moreover, $G$ satisfies the infinite dimensional exponential curvature dimension inequality $CDE(\infty,K)$ if
\[
 \Gamma_2(f)  - \Gamma\left(f, \frac{\Gamma(f)}{f}\right)  \geq K\Gamma(f).
\]
\end{defi}
From a general perspective, the exponential curvature dimension inequality is quite natural since it was shown in \cite{Bauer13} that it follows from the classical curvature dimension inequality in situations where the chain rule holds. Moreover on graphs (where the chain rule does not hold) the exponential curvature dimension inequality has some very nice properties compared to the curvature dimension inequality, see \cite{Bauer13} for more details.

\subsection{Gradient estimates and the Harnack inequality}

We recall some results in \cite{Bauer13} about the Li-Yau inequality (gradient estimate) and the corresponding Harnack inequality on graphs.
\begin{thm}\label{thm:weakballgre2}
Let $G(V,E)$ be a (finite or infinite) graph, $R > 0$, and fix $x_0 \in V$.
 Let $u: (0,\infty)\times V \to \R$ a positive solution to the heat equation $(\Delta -\partial_t) u(t,x) = 0$ if  $d(x,x_0) \leq 2R$.  If $G$ satisfies $CDE(n,0)$, then
 \begin{equation}\label{e:Li-Yau positive}\frac{\Gamma(\sqrt{u})}{u} -\frac{ \partial_t \sqrt{u}}{\sqrt{u}} < \frac{n}{2t} + \frac{n(1+D_\mu)D_m}{R}\end{equation} in the ball of radius $R$ around $x_0$.
\end{thm}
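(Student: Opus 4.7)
The plan is to adapt the classical Li--Yau maximum principle argument to the discrete setting. Set $f = \sqrt{u}$; from the product rule $\Delta(f^2) = 2f\Delta f + 2\Gamma(f)$ and the heat equation $\Delta u = \partial_t u$, one finds $\partial_t f = \Delta f + \Gamma(f)/f$, so the quantity to be estimated simplifies dramatically to
\[
F(t,x) := \frac{\Gamma(f)}{f^2} - \frac{\partial_t f}{f} = -\frac{\Delta f}{f}.
\]
Hence $F > 0$ holds exactly where $\Delta f < 0$, which is precisely the situation in which $CDE(n,0)$ gives information; on the complementary set the stated inequality is trivial. I would then introduce a spatial cutoff $\phi : V \to [0,1]$ with $\phi \equiv 1$ on $B(x_0,R)$, $\phi \equiv 0$ outside $B(x_0, 2R)$, and with controlled discrete gradient (so that $\Gamma(\phi)(x)$ and $|\Delta\phi(x)|$ are of order $1/R^2$ and $1/R$ respectively), and consider the localized function $G(t,x) := t\,\phi(x)\,F(t,x)$ on $[0,T] \times B(x_0, 2R)$.

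If $G \leq 0$ everywhere there is nothing to prove; otherwise pick a maximum point $(t_1, x_1)$ where $G > 0$, so that $\partial_t G(t_1,x_1) \geq 0$, $\Delta G(t_1,x_1) \leq 0$, and $F(t_1,x_1) > 0$, making $CDE(n,0)$ applicable at $x_1$. The main input is a discrete parabolic Bochner identity
\[
\tfrac{1}{2}(\Delta - \partial_t)\Gamma(f) = \Gamma_2(f) + \Gamma(f, \Delta f - \partial_t f) = \Gamma_2(f) - \Gamma(f, \Gamma(f)/f),
\]
whose right side is bounded below by $(\Delta f)^2 / n = f^2 F^2 / n$ by $CDE(n,0)$. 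Feeding this into the computation of $(\Delta - \partial_t)(tF)$, along with a parallel computation for $\partial_t f / f$, yields a pointwise lower bound of the schematic form $(\Delta - \partial_t)(tF) \geq (2t/n)F^2 - F$ on the set $\{F > 0\}$.

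The main obstacle is the failure of the chain rule on graphs, which prevents any clean treatment of the product $\phi F$. Cross terms of the form $\Gamma(\phi,F)$ cannot be rewritten as $\nabla\phi\cdot\nabla F$; instead one must expand them as sums over neighbors of $x_1$ and invoke a discrete Cauchy--Schwarz/Young inequality to absorb part of the contribution into $\phi F^2$ while controlling the remainder by $\Gamma(\phi)/\phi$ together with point values of $F$ at vertices adjacent to $x_1$. Converting these vertex-level quantities into uniform bounds is where $D_\mu$ (the ratio of vertex degree to smallest adjacent edge weight) and $D_m$ (the ratio of degree to measure) enter, producing a boundary contribution of order $(1+D_\mu)D_m / R$ after optimizing the Young parameter. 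Combining everything at $(t_1, x_1)$ produces a quadratic inequality in $G(t_1, x_1)$ of the schematic form $(2/n)\phi F^2 \leq F/t_1 + C(1+D_\mu)D_m F / R$, which on solving and using $\phi \equiv 1$ on $B(x_0,R)$ delivers the claimed gradient estimate.
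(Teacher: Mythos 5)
First, a meta-observation: the paper does not actually prove Theorem~\ref{thm:weakballgre2}. The remark immediately following Theorem~\ref{thm:weakballgre} states that both Theorem~\ref{thm:weakballgre2} and Theorem~\ref{thm:weakballgre} ``are special cases of the main result in \cite{Bauer13}.'' So there is no in-paper proof to compare against; I will instead assess your sketch on its own merits.

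Your preliminary reductions are correct and match the structure of the cited proof: $\partial_t f = \Delta f + \Gamma(f)/f$, the simplification $F=\Gamma(f)/f^2 - \partial_t f/f = -\Delta f/f$, and the observation that $CDE(n,0)$ only speaks where $\Delta f<0$, i.e.\ where $F>0$. Your ``parabolic Bochner'' computation $\tfrac12(\Delta-\partial_t)\Gamma(f)=\Gamma_2(f)-\Gamma\left(f,\Gamma(f)/f\right)$ is also exactly right and is the reason the \emph{exponential} CD-inequality is the correct hypothesis here. Up to that point your route and the route of~\cite{Bauer13} coincide.

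The gap is in the jump from a lower bound on $(\Delta-\partial_t)\Gamma(f)$ to the ``schematic'' inequality $(\Delta-\partial_t)(tF)\geq \tfrac{2t}{n}F^2 - F$. On a graph there is no chain or quotient rule to convert control on $(\Delta-\partial_t)\Gamma(f)$ into control on $(\Delta-\partial_t)F$ with $F=\Gamma(f)/f^2-\partial_t f/f$. What makes the argument work is the algebraic identity
\[
uF \;=\; -f\Delta f \;=\; \Gamma(f)-\tfrac12\Delta u \;=\; \Gamma(f)-\tfrac12\partial_t u ,
\]
from which $(\Delta-\partial_t)(uF)=(\Delta-\partial_t)\Gamma(f)$, and the genuine (discrete) product rule $(\Delta-\partial_t)(uF)=u(\Delta-\partial_t)F+2\Gamma(u,F)$ (valid because $(\Delta-\partial_t)u=0$). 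Combining these with $CDE(n,0)$ gives, where $F>0$,
\[
(\Delta-\partial_t)F \;\geq\; \frac{2}{n}F^2 \;-\; \frac{2}{u}\Gamma(u,F).
\]
Your stated inequality is missing the drift term $\tfrac{2}{u}\Gamma(u,F)$. On manifolds this term is $2\nabla\log u\cdot\nabla F$ and vanishes (up to a controllable contribution from the cutoff) at a maximum of $t\phi F$. On a graph it does \emph{not} vanish at the discrete maximum, and controlling it is one of the genuinely hard parts of the proof---distinct from the $\Gamma(\phi,F)$ cross terms you do discuss. Your proposal addresses only the cutoff cross terms and does not engage with the drift term, nor with the identity $uF=\Gamma(f)-\tfrac12\partial_t u$ that is needed in the first place to make the $CDE$ inequality bite. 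Without these two ingredients the claimed differential inequality for $F$, and hence the quadratic inequality you extract at the maximum point, is not justified in the graph setting.

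One further cautionary remark: the error term in the theorem is $O(1/R)$, not $O(1/R^2)$ as in the Riemannian case, and the paper explains precisely why (one cannot in general produce a discrete cutoff with the required control on $\Delta(1/\phi)$ and $\Gamma(1/\phi)$; this is the content of Definition~\ref{def:strong_cutoff} and Theorem~\ref{thm:strongballgre}). Your sketch says you want $\Gamma(\phi)=O(1/R^2)$ and $|\Delta\phi|=O(1/R)$; the latter is indeed what dominates and produces the $1/R$ in the statement, so this part of your plan is consistent with the theorem, but be aware that attempting to push for $1/R^2$ here would require the strong-cutoff hypothesis, which is not assumed.
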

For general negative curvature lower bound and the Schr\"odinger operators with the potential $q,$ we have the following modification of Theorem \ref{thm:weakballgre2}.

\begin{thm}\label{thm:weakballgre} Let $G(V,E)$ be a (finite or infinite) graph, $R > 0$, and  $x_0 \in V$.
Let $u: (0,\infty)\times V \to \R$ a positive function such that $(\Delta-\partial_t-q) u(t,x) = 0$ if  $d(x,x_0) \leq 2R$, for some constant $q$.
If $G$ satisfies $CDE(n,-K)$ for some $K \geq 0$, then for any $0 < \rho < 1$
\[ (1-\rho) \frac{\Gamma(\sqrt{u})}{u} -\frac{ \partial_t \sqrt{u}}{\sqrt{u}} -\frac{q}{2}< \frac{n}{(1-\rho)2t} +  \frac{n(2+D_\mu)D_m}{(1-\rho)R} + \frac{Kn}{2\rho},\]
in the ball of radius $R$ around $x_0$.
\end{thm}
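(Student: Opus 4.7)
The plan is a two-step reduction. First, because $q$ is constant, the function $v(t,x):=e^{qt}u(t,x)$ satisfies the heat equation $\partial_t v=\Delta v$ on $\{d(\cdot,x_0)\le 2R\}$, and the quantities transform as $\Gamma(\sqrt v)/v=\Gamma(\sqrt u)/u$ and $\partial_t\sqrt v/\sqrt v=\partial_t\sqrt u/\sqrt u+q/2$. This absorbs the potential $q$ as a shift in the time-derivative term, and reduces the problem to proving the claimed inequality with $q=0$ for a positive solution of the heat equation on a graph satisfying $CDE(n,-K)$.

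For the reduced problem I would adapt the localized maximum-principle argument used for Theorem \ref{thm:weakballgre2}. Set $f=\sqrt v$ and consider
\[
F(t,x):=t\,\phi(x)\left[(1-\rho)\frac{\Gamma(f)(x)}{v(t,x)}-\frac{\partial_t f(t,x)}{f(t,x)}\right],
\]
where $\phi:V\to[0,1]$ is a cut-off with $\phi\equiv 1$ on the ball of radius $R$, $\phi\equiv 0$ outside the ball of radius $2R$, and whose square root is $C/R$-Lipschitz on edges (the same choice used in \cite{Bauer13}). If $F\le 0$ on $[0,T]\times V$ the desired inequality at time $T$ holds trivially; otherwise $F$ attains a positive maximum at some $(\bar t,\bar x)$ with $\phi(\bar x)>0$, at which $\partial_t F\ge 0$ and $\Delta F\le 0$, hence $(\partial_t-\Delta)F(\bar t,\bar x)\ge 0$.

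The heart of the argument is expanding $(\partial_t-\Delta)F\ge 0$ at $(\bar t,\bar x)$. The second-order contribution is $2[\Gamma_2(f)-\Gamma(f,\Gamma(f)/f)]/v$, which by $CDE(n,-K)$ is bounded below by $2(\Delta f)^2/(nv)-2K\Gamma(f)/v$. Using $\Delta(f^2)=2f\Delta f+2\Gamma(f)$ together with the heat equation rewrites $\Delta f/f=\partial_t f/f+\Gamma(f)/v$, so that $(\Delta f)^2/v$ can be re-expressed in terms of the quantity $W:=(1-\rho)\Gamma(f)/v-\partial_t f/f$ together with a remainder proportional to $\rho\,\Gamma(f)/v$; schematically, $(\Delta f)^2/v\ge (W+\rho\Gamma(f)/v)^2$. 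The curvature term $-2K\Gamma(f)/v$ is then absorbed against the $\rho^2(\Gamma(f)/v)^2$ piece via Young's inequality, producing precisely the correction $Kn/(2\rho)$. The $\phi$-derivative terms are treated exactly as in Theorem \ref{thm:weakballgre2} through the Lipschitz property of $\sqrt\phi$, yielding the geometric correction $n(2+D_\mu)D_m/((1-\rho)R)$; the change from $1+D_\mu$ to $2+D_\mu$ accommodates the extra Young split needed for the curvature absorption.

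The principal obstacle is the bookkeeping in the last step: the parameter $\rho$ must be handled so that both the reduced coefficient $(1-\rho)$ in $W$ and the absorption of $-2K\Gamma(f)/v$ into $(W+\rho\Gamma(f)/v)^2$ conspire to give, after rearrangement, exactly the coefficients $1/((1-\rho)2t)$ and $Kn/(2\rho)$ on the right-hand side, with no double-counting. Once this algebraic balance is verified, evaluating at $(\bar t,\bar x)$, using $\phi(\bar x)\le 1$ and $\phi\equiv 1$ on the ball of radius $R$, and letting $T$ vary yields the stated pointwise bound for $u$ on that ball for all positive times.
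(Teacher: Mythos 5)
The paper itself does not prove this theorem. Immediately after the statement there is a remark explaining that Theorem~\ref{thm:weakballgre} (together with Theorem~\ref{thm:weakballgre2}) is a special case of the main result of \cite{Bauer13}, and the result is imported by citation. There is therefore no in-paper proof to compare your argument against line by line.

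That caveat aside, your sketch is a plausible reconstruction of the kind of localized maximum-principle argument carried out in \cite{Bauer13}, and your preliminary reduction $v:=e^{qt}u$, $\Gamma(\sqrt v)/v=\Gamma(\sqrt u)/u$, $\partial_t\sqrt v/\sqrt v=\partial_t\sqrt u/\sqrt u+q/2$, is correct and genuinely simplifies matters when $q$ is constant; note, however, that \cite{Bauer13} treats potentials $q=q(t,x)$, for which this conjugation is unavailable, and so must fold $q$ directly into the maximum-principle computation rather than eliminate it in advance. One algebraic slip worth flagging: from $\partial_t v=\Delta v$, $v=f^2$, and $\Delta(f^2)=2f\Delta f+2\Gamma(f)$ one obtains $\partial_t f=\Delta f+\Gamma(f)/f$, hence $\Delta f/f=\partial_t f/f-\Gamma(f)/v$, with a minus sign rather than the plus sign you wrote. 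With the corrected sign, setting $W=(1-\rho)\Gamma(f)/v-\partial_t f/f$ gives $W+\rho\,\Gamma(f)/v=-\Delta f/f$, so your schematic $(\Delta f)^2/v=(W+\rho\,\Gamma(f)/v)^2$ is actually an identity and your outline goes through. The remaining work — Young's inequality to trade the $-2K\Gamma(f)/v$ term against the $\rho^2(\Gamma(f)/v)^2$ piece to produce $Kn/(2\rho)$, and the cutoff bookkeeping producing the $(2+D_\mu)$ geometric correction — is exactly where the substance of the proof in \cite{Bauer13} lies and is left at sketch level here, but the strategy you describe is the right one.
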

\begin{rem}Theorem \ref{thm:weakballgre2} and Theorem \ref{thm:weakballgre} are special cases of the main result in \cite{Bauer13}. In the most general case the potential $q$ may depend on the variables $x$ and $t$. For simplicity of exposition we restrict ourselves to the special case when $q$ is constant. However our results can easily be extended to the general case.
\end{rem}

On Riemannian manifolds~\cite{LiYau86}, a result similar to Theorem~\ref{thm:weakballgre} holds with $1/R^2$ instead of $1/R$ without any further assumptions.  In one of the key steps of the proof in the Riemannian case, the Laplacian comparison theorem is applied to the distance function. This together with the chain rule implies that one can find a cut-off function $\phi$ that satisfies \begin{equation}\label{eq:strong}\Delta \phi \geq -c(n) \frac{1 +R\sqrt{K}}{R^2},\end{equation}
and \begin{equation}\label{eq:strong1}\frac{|\nabla\phi|^2}{\phi}\leq\frac{c(n)}{R^2}\end{equation} where $c$ is a constant that only depends on the dimension $n$.

In contrast to manifolds, on graphs, one can only prove the Li-Yau inequality with $1/R$ (instead of $1/R^2$) without any further assumptions, see Theorem \ref{thm:weakballgre}.  The reason is that on graphs it is not clear that a cut-off function with similar properties always exists. However in order to prove the Li-Yau inequality with $1/R^2$ such a cut-off function is needed. This motivates the following definition.
\begin{defi}\label{def:strong_cutoff}
 Let $G(V,E)$ be a graph satisfying $CDE(n,-K)$ for some $K \geq 0$. We say that the function $\phi: V\to [0,1]$ is an \textit{$(c,R)$-strong} cut-off function centered at $x_0 \in V$ and supported on a set $S \subset V$ if $\phi(x_0 ) =1$, $\phi(x) = 0 $ if $x \not \in S$ and for any vertex $x \in S$
\begin{enumerate}
\item either $\phi(x) <\frac{c(1 + R\sqrt{K})}{2R^2}$,
\item or $\phi$ does not vanish in the immediate neighborhood of $x$ and
\[\phi^2(x) \Delta \frac{1}{\phi}(x) \leq D_m \frac{c(1 + R\sqrt{K})}{R^2} \;\mbox{ and }\; \phi^3(x) \Gamma\left(\frac{1}{\phi}\right)(x) \leq  D_m \frac{c}{R^2},\]
where the constant $c = c(n)$ only depends on the dimension $n$.
\end{enumerate}
\end{defi}

In case a strong cut-off function exists, one can prove the Li-Yau inequality with $1/R^2$.
\begin{thm} \label{thm:strongballgre} Let $G(V,E)$ be a (finite or infinite) graph satisfying $CDE(n,-K)$ for some $K \geq 0$.  Let $R > 0$ and $x_0 \in V$. Assume that $G$ has a $(c,R)$-strong cut-off function supported on $S \subset V$ and centered at $x_0$.  Let $u: (0,\infty)\times V \to \R$ be a positive function such that $(\Delta - \partial_t-q)u(t,x) = 0$ if  $x \in S$, for some constant  $q$.
Then for $0<\rho<  1$,
\begin{align*} &\left((1-\rho)\frac{\Gamma(\sqrt{u})}{u} -\frac{ \partial_t \sqrt{u}}{\sqrt{u}} -\frac{q}{2}\right)(t,x_0)  \\ <&  \frac{n}{2(1-\rho)t} + \frac{D_m cn}{2(1-\rho)R^2}\left(1 +R\sqrt{K}+\frac{n (D_\mu+1)^2}{4\rho (1-\rho)}\right) +\frac{Kn}{2\rho}.
\end{align*}
 \end{thm}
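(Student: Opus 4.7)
The plan is to adapt the Bakry--Emery maximum principle argument that was used in \cite{Bauer13} for the weak estimate Theorem~\ref{thm:weakballgre}, but now with the strong cut-off $\phi$ in place of a generic Lipschitz cut-off, so that the $1/R$ error is upgraded to $1/R^2$. Set
\[
F := (1-\rho)\frac{\Gamma(\sqrt{u})}{u}-\frac{\partial_t\sqrt{u}}{\sqrt{u}}-\frac{q}{2},
\]
and, for arbitrary $T>0$, consider $H(t,x):=t\,\phi(x)\,F(t,x)$ on $[0,T]\times V$, supported in $[0,T]\times S$. If $H\le 0$ throughout, then $F(T,x_0)\le 0$ (since $\phi(x_0)=1$) and the claim is trivial; otherwise $H$ attains a strictly positive maximum at some $(t^\ast,x^\ast)$ with $x^\ast\in S$ and $t^\ast\in(0,T]$.

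At such a maximum the usual discrete max principle on graphs gives $\Delta H(t^\ast,x^\ast)\le 0$ and $\partial_t H(t^\ast,x^\ast)\ge 0$. Using the graph Leibniz identity $\Delta(\phi F)=\phi\,\Delta F+F\,\Delta\phi+2\Gamma(\phi,F)$, these two conditions combine into
\[
(\Delta-\partial_t)F\;\le\;\frac{F}{t^\ast}-\frac{F\,\Delta\phi}{\phi}-\frac{2\Gamma(\phi,F)}{\phi}\quad\text{at }(t^\ast,x^\ast).
\]
Simultaneously, the $CDE(n,-K)$ hypothesis, through the same Bochner-type computation that drove the weak version, yields a matching lower bound of the form $(\Delta-\partial_t)F\ge \tfrac{2(1-\rho)}{n}F^2 - C\,K\,\Gamma(\sqrt u)/u$. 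Subtracting produces at $(t^\ast,x^\ast)$ a quadratic-in-$F$ inequality whose coefficients are built out of $\Delta\phi/\phi$ and $\Gamma(\phi,F)/\phi$.

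The strong cut-off properties are exactly what is needed to control those two error terms with $1/R^2$ scaling. Rewriting $\Delta\phi/\phi=-\phi\,\Delta(1/\phi)+2\Gamma(\phi)/\phi^2$ and invoking $\phi^2\Delta(1/\phi)\le D_m c(1+R\sqrt K)/R^2$ from Definition~\ref{def:strong_cutoff}(2), one bounds $|F\Delta\phi/\phi|$ by $O((1+R\sqrt K)/R^2)\,|F|$. The truly delicate piece is $\Gamma(\phi,F)/\phi$: without a chain rule it cannot be rewritten as $\sqrt{\Gamma(\sqrt u)/u}\cdot\sqrt{\Gamma(\phi)}$ as on a manifold, so one applies a pointwise Cauchy--Schwarz that, using $\phi(x^\ast)F(x^\ast)\ge\phi(y)F(y)$ for neighbors $y$ of $x^\ast$, transfers the difference from $F$ onto $\phi$ and produces a term proportional to $\phi^3\Gamma(1/\phi)\cdot F^2$, which is exactly the quantity bounded by $D_m c/R^2$ in Definition~\ref{def:strong_cutoff}(2); the splitting parameter in the Cauchy--Schwarz is tuned proportionally to $\rho$ and accounts for the factor $n(D_\mu+1)^2/(4\rho(1-\rho))$ in the final bound. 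Inserting these estimates into the quadratic inequality, solving for $F(t^\ast,x^\ast)$, and using $H(T,x_0)\le H(t^\ast,x^\ast)$ with $\phi(x_0)=1$ gives the stated estimate after renaming $T=t$; the alternative case (1) of Definition~\ref{def:strong_cutoff} is handled directly since $\phi(x^\ast)$ is then already of order $(1+R\sqrt K)/R^2$. The main obstacle throughout is the handling of $\Gamma(\phi,F)/\phi$, which is precisely why Definition~\ref{def:strong_cutoff} imposes the separate bound on $\phi^3\Gamma(1/\phi)$ at the $1/R^2$ scale rather than merely a bound on $\Gamma(\phi)$.
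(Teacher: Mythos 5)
The paper itself does not supply a proof of Theorem~\ref{thm:strongballgre}: the subsection in which it appears opens with ``We recall some results in \cite{Bauer13}\dots,'' and the theorem (together with Theorems~\ref{thm:weakballgre2}, \ref{thm:weakballgre}, and \ref{thm:Harnack}) is imported from that reference without argument. So there is no in-paper proof to compare your outline against. With that caveat, your plan is the correct one at the level of strategy and does match the argument in \cite{Bauer13}: apply a maximum principle to $H=t\phi F$ at a positive maximum $(t^*,x^*)$, feed in $CDE(n,-K)$, and control the two error terms with the strong-cut-off bounds so that the $1/R$ scaling of the weak version is upgraded to $1/R^2$. You also correctly identify why Definition~\ref{def:strong_cutoff} asks for a bound on $\phi^3\Gamma(1/\phi)$ rather than on $\Gamma(\phi)$: it is a chain-rule surrogate for taming $\Gamma(\phi,F)/\phi$.

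Two steps are asserted far more quickly than they can actually be carried out, and they are where the real work lives. First, the inequality $(\Delta-\partial_t)F \ge \tfrac{2(1-\rho)}{n}F^2 - C K\,\Gamma(\sqrt u)/u$ is not a direct consequence of $CDE(n,-K)$. On a graph $CDE$ controls $\Gamma_2(v)-\Gamma(v,\Gamma(v)/v)$ for $v=\sqrt u$, and only at vertices where $\Delta v<0$; it enters through the graph Bochner-type identity relating $(\Delta-\partial_t)(\Gamma(v)/v)$ to that combination, and the quadratic-in-$F$ term has to be reconstructed from the $(\Delta v)^2/n$ piece via the Schr\"odinger equation for $v$, with a separate treatment of vertices where $\Delta v\ge 0$. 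Your sketch compresses all of this into a single line. Second, when the maximum point $x^*$ falls under alternative (1) of Definition~\ref{def:strong_cutoff}, the bounds on $\phi^2\Delta(1/\phi)$ and $\phi^3\Gamma(1/\phi)$ are simply not available there, and the observation that ``$\phi(x^*)$ is already of order $(1+R\sqrt K)/R^2$'' is not by itself sufficient: you still need an a priori upper bound on $F(t^*,x^*)$ before the smallness of $\phi(x^*)$ gives you control of $H(t^*,x^*)=t^*\phi(x^*)F(t^*,x^*)$, and this requires a separate input (for instance the weak estimate). Both of these are genuine gaps that must be filled from \cite{Bauer13} before the outline becomes a proof.
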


A corollary of the Li-Yau inequality is the following Harnack inequality that we will use together with the DGG Lemma to prove the heat kernel estimate in Section \ref{Section Applications DGG}.

\begin{thm}\label{thm:Harnack}Let $G(V,E)$ be a (finite or infinite) graph satisfying $CDE(n,-K)$ for some $K \geq 0$. If $u:(0,\infty)\times V \to \mathbb{R}$ is a positive solution to the equation $(\Delta-\partial_t-q )u(t,x)=0$ for some constant $q$ on the whole graph, then for any $0<\rho<1, 0<T_1\leq T_2,$ and  $x,y\in V,$ $$u(T_1,x)\leq u(T_2,y)\left(\frac{T_2}{T_1}\right)^{\frac{n}{1-\rho}}\exp\left(\left(\frac{Kn}{\rho}+q\right)(T_2-T_1)+\frac{4m_{\mathrm{max}}d^2(x,y)}{(1-\rho)(T_2-T_1)\mu_{\mathrm{min}}}\right).$$

\end{thm}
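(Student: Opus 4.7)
The plan is to adapt Li and Yau's strategy of integrating the gradient estimate along a space-time path \cite{LiYau86} to the discrete setting. Since $u$ is a positive solution of $(\Delta-\partial_t-q)u=0$ on the whole graph, the first step is to let $R\to\infty$ in Theorem~\ref{thm:weakballgre} to obtain the global pointwise Li--Yau inequality
\[
\partial_t\log\sqrt{u}(t,v)\ge (1-\rho)\frac{\Gamma(\sqrt{u})}{u}(t,v)-\frac{q}{2}-\frac{n}{2(1-\rho)t}-\frac{Kn}{2\rho}
\]
at every $(t,v)\in(0,\infty)\times V$.

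Next I would fix a combinatorial geodesic $x=x_0\sim x_1\sim\cdots\sim x_d=y$ of length $d=d(x,y)$ and partition $[T_1,T_2]$ into $d$ equal subintervals by $t_i:=T_1+i\tau$ with $\tau:=(T_2-T_1)/d$. The telescoping identity
\[
\log\frac{u(T_2,y)}{u(T_1,x)}=\sum_{i=0}^{d-1}\bigl(\log u(t_{i+1},x_{i+1})-\log u(t_i,x_i)\bigr)
\]
reduces the theorem to a per-step lower bound. I plan to split each summand into a time-integration on a fixed vertex and a spatial jump,
\[
\log u(t_{i+1},x_{i+1})-\log u(t_i,x_i)=\int_{t_i}^{t_{i+1}}\!\!\partial_t\log u(t,x_{i+1})\,dt+\log\frac{u(t_i,x_{i+1})}{u(t_i,x_i)},
\]
bound the first term from below by plugging in twice the global Li--Yau inequality (which produces a non-negative $2(1-\rho)\int\Gamma(\sqrt u)/u\,dt$ together with the usual $-q\tau-\tfrac{n}{1-\rho}\log(t_{i+1}/t_i)-\tfrac{Kn\tau}{\rho}$), and bound the spatial jump by combining the edgewise coercivity
\[
\frac{\Gamma(\sqrt u)(x_j)}{u(x_j)}\ge\frac{\mu_{\min}}{2m_{\max}}\frac{\bigl(\sqrt{u(x_{i+1})}-\sqrt{u(x_i)}\bigr)^{2}}{u(x_j)},\qquad j\in\{i,i+1\},
\]
with the elementary inequality $|\log r|\le|r-r^{-1}|/2$ applied to $r=\sqrt{u(x_{i+1})/u(x_i)}$, which yields $|\log(u(x_{i+1})/u(x_i))|\le|\sqrt{u(x_{i+1})}-\sqrt{u(x_i)}|(1/\sqrt{u(x_i)}+1/\sqrt{u(x_{i+1})})$.

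The hard part is the absence of a chain rule on graphs: the spatial jump is not directly dominated by $\Gamma(\sqrt u)/u$, and the above estimate only yields a factor of $\sqrt{\Gamma(\sqrt u)/u}$ at $x_{i+1}$ (or $x_i$), whereas the Li--Yau step produces $\Gamma(\sqrt u)/u$ integrated in time. I plan to bridge the pointwise/integrated mismatch by first applying the spatial inequality pointwise in $t$, then invoking Cauchy--Schwarz in time, and finally using a weighted Young inequality $2\sqrt{AB}\le\epsilon A+B/\epsilon$ with $\epsilon$ proportional to $(1-\rho)\tau$ to absorb the $\sqrt{\Gamma/u}$ contribution into the positive $(1-\rho)\int\Gamma/u\,dt$ produced by Li--Yau. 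The absorption leaves a residual of order $4m_{\max}/((1-\rho)\mu_{\min}\tau)$ per step. Summing over the $d$ steps converts this residual into the claimed spatial penalty $4m_{\max}d^2/((1-\rho)\mu_{\min}(T_2-T_1))$, the logarithmic and linear-in-$\tau$ contributions telescope to $\tfrac{n}{1-\rho}\log(T_2/T_1)$ and $(Kn/\rho+q)(T_2-T_1)$, and exponentiation yields the stated Harnack inequality.
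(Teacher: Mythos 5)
The paper does not actually prove Theorem~\ref{thm:Harnack}; it is recalled verbatim from~\cite{Bauer13} (see the sentence ``We recall some results in [Bauer13] about the Li-Yau inequality \dots\ and the corresponding Harnack inequality''). So there is no in-text proof for you to have matched. Assessing your blind reconstruction on its own merits: the global scheme --- let $R\to\infty$ in Theorem~\ref{thm:weakballgre} to get a global Li--Yau inequality, telescope $\log u$ along a space-time path, control each step by a time integral of the gradient estimate plus a spatial jump, and use edgewise coercivity $\Gamma(\sqrt u)(x)\geq \tfrac{\mu_{\min}}{2m_{\max}}(\sqrt{u(y)}-\sqrt{u(x)})^2$ and the bound $|\log(a/b)|\le|\sqrt a-\sqrt b|(1/\sqrt a+1/\sqrt b)$ --- is exactly the right idea, and the bookkeeping (residual of order $m_{\max}/((1-\rho)\mu_{\min}\tau)$ per step, summing to $m_{\max}d^2/((1-\rho)\mu_{\min}(T_2-T_1))$) does land on the claimed form.

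However, the absorption step as you have written it does not close. With your displayed decomposition
\[
\log u(t_{i+1},x_{i+1})-\log u(t_i,x_i)=\int_{t_i}^{t_{i+1}}\partial_t\log u(t,x_{i+1})\,dt+\log\frac{u(t_i,x_{i+1})}{u(t_i,x_i)},
\]
the spatial jump lives at the single time $t_i$, whereas the coercive term produced by Li--Yau is $\int_{t_i}^{t_{i+1}}\Gamma(\sqrt u)/u\,(t,x_{i+1})\,dt$. There is no inequality that bounds the pointwise value $\sqrt{\Gamma(\sqrt u)/u}(t_i,x_{i+1})$ by that integral --- Cauchy--Schwarz in time only helps if the square root appears \emph{under} a time integral, which it does not in your identity. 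You flag the ``pointwise/integrated mismatch'' yourself, but the proposed fix presupposes a different decomposition. The standard cure is to spread the spatial jump over the time interval, e.g.\ set $v(t)=u(t,x_i)^{1-s(t)}u(t,x_{i+1})^{s(t)}$ so that $\tfrac{d}{dt}\log v=(1-s)\partial_t\log u(\cdot,x_i)+s\,\partial_t\log u(\cdot,x_{i+1})+s'(t)\log\bigl(u(\cdot,x_{i+1})/u(\cdot,x_i)\bigr)$. Even then, with linear $s$ the Li--Yau coercivity comes with the interpolation weights $(1-s)$ and $s$, and the Young-inequality residual scales like $1/\min(s,1-s)$, which is not integrable at the endpoints; one must take an interpolation with $s'$ vanishing at $\sigma=0,1$ (or absorb more cleverly) to make the step finite. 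This is the genuine gap: your sketch stops exactly where the discrete argument becomes delicate, and the resolution you hint at (pointwise spatial inequality, then Cauchy--Schwarz in time, then Young) is not compatible with the decomposition you wrote down and does not by itself fix the endpoint blow-up once you switch to the interpolated path.
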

\subsection{Applications of the Li-Yau inequality}
In this section, we show several applications of the Li-Yau inequality on graphs.

As a first application of Li-Yau inequality in \cite{Bauer13}, we obtain Yau's Liouville theorem on positive harmonic functions on graphs satisfying $CDE(n,0)$.

\begin{thm}[Yau's Liouville theorem on graphs]
Let $G(V,E)$ be a graph satisfying the exponential curvature dimension inequality $CDE(n,0).$ Then any
positive harmonic function on $G$ is constant. In particular, bounded harmonic functions are constant.
\end{thm}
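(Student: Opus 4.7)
The natural plan is to deploy Theorem~\ref{thm:weakballgre2} directly, viewing a harmonic function as a (time-independent) positive solution of the heat equation. Let $f$ be a positive harmonic function on $G$ and set $u(t,x) = f(x)$. Then $(\Delta - \partial_t)u = \Delta f = 0$ everywhere, so the hypotheses of Theorem~\ref{thm:weakballgre2} are satisfied on the ball of radius $2R$ around any fixed $x_0 \in V$, for every $R > 0$. Since $u$ is time-independent, $\partial_t \sqrt{u} \equiv 0$, and the Li-Yau inequality collapses to
\[
\frac{\Gamma(\sqrt{f})(x_0)}{f(x_0)} < \frac{n}{2t} + \frac{n(1+D_\mu)D_m}{R}
\]
for all $t > 0$ and all $R > 0$.

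The next step is a double limit. Let $t \to \infty$ to kill the first term, and then $R \to \infty$ (which is allowed because $f$ is harmonic on the entire graph) to kill the second. This yields $\Gamma(\sqrt{f})(x_0) \leq 0$. But by definition
\[
\Gamma(\sqrt{f})(x_0) = \frac{1}{2 m(x_0)} \sum_{y\sim x_0} \mu_{x_0 y}\bigl(\sqrt{f(y)} - \sqrt{f(x_0)}\bigr)^2 \geq 0,
\]
so in fact $\Gamma(\sqrt{f})(x_0) = 0$. Since $\mu_{xy} > 0$ whenever $x\sim y$, this forces $f(y) = f(x_0)$ for every neighbor $y$ of $x_0$. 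As $x_0$ was arbitrary and $G$ is connected, $f$ is constant throughout $V$.

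For the statement about bounded harmonic functions, the reduction is immediate: if $h$ is a bounded harmonic function with $|h|\leq M$, then $f := h + M + 1 \geq 1 > 0$ is also harmonic (constants lie in the kernel of $\Delta$), so the positive case applies and $f$, hence $h$, is constant.

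I do not expect a genuine obstacle here; the only subtle point is to verify that Theorem~\ref{thm:weakballgre2} may be invoked with $R$ arbitrarily large, which is fine because a harmonic function on $G$ is automatically a positive heat solution on every ball. The nontrivial content is all packaged inside the Li-Yau inequality itself, and this proof merely extracts the Liouville statement by taking $t, R \to \infty$.
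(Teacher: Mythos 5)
Your proof is correct and follows the same route as the paper: apply the Li--Yau estimate (Theorem~\ref{thm:weakballgre2}) to a time-independent positive harmonic function, let $t\to\infty$ and $R\to\infty$ to conclude $\Gamma(\sqrt f)\equiv 0$, and then use connectedness together with $\mu_{xy}>0$ on edges to deduce that $f$ is constant. The only (minor) divergence is in the bounded case: the paper shifts by $v = u - \inf u$, which strictly speaking requires a minimum-principle remark to rule out $v$ vanishing somewhere, whereas your choice $f = h + M + 1 \geq 1$ sidesteps that subtlety cleanly.
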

\begin{proof} For any time-independent positive harmonic function on $G,$ the Li-Yau gradient estimate
\eqref{e:Li-Yau positive} implies the Liouville theorem by letting $t\to\infty$ and $R\to \infty.$ The second part follows from the first one by considering the positive function $v(x) = u(x) -\inf u$.
\end{proof}
As we have seen, Yau's Liouville theorem follows directly from the Li-Yau inequality. Yau's Liouville theorem can also be proved by using the Moser iteration. This was initiated by Grigor'yan \cite{Grigoryan91} and Saloff-Coste \cite{SaloffCoste92} independently on Riemannian manifolds. Following their strategy, if we assume the volume doubling property and the Poincar\'e inequality, the Moser iteration \cite{Delmotte97} yields the Harnack inequality which will imply Yau's Liouville theorem on graphs. However it is difficult to compare these results since it is still unknown if the volume doubling property and the Poincar\'e inequality hold for graphs satisfying $CDE(n,0).$ Moreover, Saloff-Coste \cite{Saloff-Coste97} proved Yau's Liouville theorem for graphs satisfying certain conditions on the growth behavior of the volume of distance balls.

Our second application is an analogue to Cheng's Liouville theorem that any sublinear growth harmonic function on a Riemannian manifold with nonnegative Ricci curvature is constant. On general graphs satisfying $CDE(n,0)$, we can only prove the sub-square-root growth harmonic functions are constant, see below for the definition. However, if we further assume the existence of strong cut-off functions, then we obtain Cheng's Liouville theorem \cite{Yau75, Cheng80} for sublinear growth harmonic functions.

\begin{defi} For any
$R>0$, $x\in V$ and $u:B_R(x) \to\R,$ we define the oscillation of $u$ over the ball $B_R(x)$
by $$\osc_{B_R(x)}u:=\max_{B_R(x)}{u}-\min_{B_R(x)}u.$$ The function $u$ is
called of sub-square-root growth if
$$\max_{B_R(x)}|u|=o(R^{\frac12}) \mbox{ as}\ R\to\infty.$$ It is called of
sublinear growth if $$\max_{B_R(x)}|u|=o(R)\mbox{ as}\ R\to\infty.$$
\end{defi}
Clearly,
$u$ is of sub-square-root growth if and only if
$\osc_{B_R(x)}u=o(R^{\frac12})$ as $\ R\to\infty.$ Similarly, $u$ is of
sublinear growth if and only if $\osc_{B_R(x)}u=o(R)$ as $\ R\to\infty.$

\begin{thm}[Cheng's Liouville theorem on graphs]\label{thm:Cheng2}
Let $G=(V,E)$ be a graph satisfying the exponential curvature dimension inequality $CDE(n,0).$ Then any
sub-square-root growth harmonic function is constant. Furthermore,
if a strong cut-off function exists for any large ball, any sublinear growth harmonic
function is constant.
\end{thm}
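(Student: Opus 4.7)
The strategy is the standard one, but adapted to the square-root form of the Li-Yau inequality on graphs: localize, reduce to a positive harmonic function by an additive shift, apply Theorem~\ref{thm:weakballgre2} (resp.\ Theorem~\ref{thm:strongballgre}), let $t\to\infty$, and translate the resulting gradient bound into pointwise differences at neighboring vertices.

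Fix $x_0\in V$ and $R>0$. Given a harmonic function $u$ on $G$, set
\[
v(x) = u(x) - \min_{B_{2R}(x_0)} u + \epsilon,\qquad \epsilon>0,
\]
on $B_{2R}(x_0)$. Then $v$ is a positive, time-independent (hence $\partial_t v=0$) solution of the heat equation on $B_{2R}(x_0)$. Applying Theorem~\ref{thm:weakballgre2} at $x_0$ and letting $t\to\infty$ gives
\[
\frac{\Gamma(\sqrt{v})(x_0)}{v(x_0)} \le \frac{n(1+D_\mu)D_m}{R}.
\]
For any neighbor $y\sim x_0$, the single summand in $\Gamma(\sqrt{v})(x_0)$ yields
\[
\bigl(\sqrt{v(y)}-\sqrt{v(x_0)}\bigr)^{2} \le \frac{2m(x_0)\,v(x_0)\,n(1+D_\mu)D_m}{\mu_{x_0y}\,R}.
\]
Writing $v(y)-v(x_0)=(\sqrt{v(y)}-\sqrt{v(x_0)})(\sqrt{v(y)}+\sqrt{v(x_0)})$ and bounding $v(x_0),v(y)\le \osc_{B_{2R}(x_0)}u+\epsilon$, we obtain
\[
(u(y)-u(x_0))^{2}=(v(y)-v(x_0))^{2}\le \frac{C(n,D_\mu,D_m,m_{\max},\mu_{\min})}{R}\bigl(\osc_{B_{2R}(x_0)}u+\epsilon\bigr)^{2}.
\]
Sending $\epsilon\to 0$ and then $R\to\infty$, the hypothesis $\osc_{B_{2R}(x_0)}u=o(R^{1/2})$ forces $(\osc)^2/R\to 0$, hence $u(y)=u(x_0)$. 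As this holds for every neighbor of every $x_0$, connectivity of $G$ gives that $u$ is constant.

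For the second assertion, we repeat the argument but use Theorem~\ref{thm:strongballgre} with $K=0$, $q=0$ and any fixed $\rho\in(0,1)$; letting $t\to\infty$ on the positive harmonic function $v$ on the support of the strong cut-off function centered at $x_0$ replaces the bound by
\[
\frac{\Gamma(\sqrt{v})(x_0)}{v(x_0)} \le \frac{C'(n,D_\mu,D_m)}{R^{2}},
\]
so the above chain of inequalities upgrades to
\[
(u(y)-u(x_0))^{2}\le \frac{C''}{R^{2}}\bigl(\osc_{B_{2R}(x_0)}u+\epsilon\bigr)^{2},
\]
and sublinear growth $\osc=o(R)$ again yields $u(y)=u(x_0)$.

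The main obstacle is conceptual rather than technical: because the graph Li-Yau inequality controls $\Gamma(\sqrt{u})/u$ rather than $\Gamma(u)/u$, one cannot directly extract a bound on $|u(y)-u(x_0)|$; the step of rewriting $v(y)-v(x_0)$ as a product of a square-root difference and a square-root sum, together with the $\osc$ bound on $v$ itself, is what allows the sub-square-root growth (rather than, say, sub-fourth-root growth) to be precisely the borderline condition matching the $1/R$ decay from Theorem~\ref{thm:weakballgre2}. Everything else is localization and a limit.
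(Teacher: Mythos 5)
Your proposal is correct and takes essentially the same route as the paper: shift by the infimum over $B_{2R}(x_0)$ to get a positive harmonic $v$, apply Theorem~\ref{thm:weakballgre2} (resp.\ Theorem~\ref{thm:strongballgre}) and send $t\to\infty$, then convert the $\Gamma(\sqrt v)/v$ bound into a bound on $v(y)-v(x_0)$ by factoring through square roots and using the oscillation bound, finally letting $\epsilon\to 0$ and $R\to\infty$. The only cosmetic difference is that you isolate a single summand of $\Gamma(\sqrt v)(x_0)$ to bound $(u(y)-u(x_0))^2$ directly, whereas the paper bounds the full quantity $\Gamma(u)(x_0)$ and concludes $\Gamma(u)\equiv0$; these are equivalent, and your insertion of the global constants $m_{\max},\mu_{\min}$ is unnecessary (pointwise $m(x_0),\mu_{x_0y}$ suffice, since $x_0,y$ are fixed as $R\to\infty$).
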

\begin{proof}
Let $u$ be a sub-square-root growth harmonic function on $G,$ i.e.
for any $x\in V,$ $\osc_{B_{R}(x)}u=o(R^{\frac12})$ as $R\to\infty.$ For
any $R\geq 1,$ set $v:=u-\inf_{B_{2R}(x)}u+\epsilon$, for some $\epsilon > 0$. Then $v$ is a
positive harmonic function on $B_{2R}(x).$ Theorem \ref{thm:weakballgre2} implies the following gradient estimate for any time-independent positive harmonic functions $f$
$$\frac{\Gamma(\sqrt{f})}{f}(x)\leq \frac{C}{R},$$ for some constant $C$.
This yields
\begin{eqnarray*}
\Gamma(u)(x)
&=&\Gamma(v)(x)=\frac{1}{2m(x)}\sum_{y\sim x}\mu_{xy}(v(x)-v(y))^2\\
&=&\frac{1}{2m(x)}\sum_{y\sim x}\mu_{xy}\left(\frac{v(x)-v(y)}{\sqrt{v(x)}+\sqrt{v(y)}}\right)^2(\sqrt{v(x)}+\sqrt{v(y)})^2\\
&\leq & 4\Gamma(\sqrt{v})(x)(\osc_{B_{2R}(x)}u+\epsilon)\leq C\frac{(\osc_{B_{2R}(x)}u+\epsilon)^2}{R}
\end{eqnarray*} as $R\to\infty\mbox{ and }\epsilon\to 0$. Hence $\Gamma(u)(x)=0$ for any $x\in V.$ Thus, $u$ is a constant function.

If we assume the existence of strong cut-off functions, then the same argument as above using Theorem \ref{thm:strongballgre} yields the second assertion.
\end{proof}

As a further application of the Li-Yau inequality, we obtain an estimate for the greatest lower bound of the $\ell^2$-spectrum known as Cheng's eigenvalue estimate \cite{Cheng75b}.
\begin{thm}[Cheng's eigenvalue estimate on graphs]\label{thm:Cheng}
Let $G$ be a graph satisfying the exponential curvature dimension inequality $\CDE(n,-K)$ and let $\mu$ be the greatest lower bound for the $\ell^2$-spectrum of the graph Laplacian $\Delta.$ Then
$$\mu\leq Kn.$$
\end{thm}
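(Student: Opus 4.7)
The plan is to combine the Li-Yau inequality of Theorem~\ref{thm:weakballgre}, applied directly to the heat kernel, with the spectral upper bound on the on-diagonal heat kernel. This approach has the advantage of avoiding the construction of a positive ground state eigenfunction for $-\Delta$ (which need not exist in $\ell^2$ on infinite graphs). Fix an arbitrary $x_0 \in V$ and set $u(t,x) := p_t(x, x_0)$; this is a strictly positive solution of $(\Delta - \partial_t)u = 0$ on $V \times (0,\infty)$, so Theorem~\ref{thm:weakballgre} applies with $q = 0$.

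Evaluating the Li-Yau inequality at $x = x_0$ with arbitrary $R > 0$ and $\rho \in (0,1)$, and dropping the nonnegative term $(1-\rho)\Gamma(\sqrt{u})/u$, gives
\[
-\partial_t \log p_t(x_0,x_0) \,\leq\, \frac{n}{(1-\rho)t} + \frac{2n(2+D_\mu)D_m}{(1-\rho)R} + \frac{Kn}{\rho}.
\]
Since the left-hand side does not depend on $R$, letting $R \to \infty$ removes the middle term. Integrating in $t$ from $1$ to $T$ then yields the lower bound
\[
p_T(x_0,x_0) \,\geq\, p_1(x_0,x_0)\, T^{-n/(1-\rho)} \exp\!\left(-\frac{Kn(T-1)}{\rho}\right).
\]

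For the matching upper bound, set $\widetilde\delta_{x_0} := \delta_{x_0}/\sqrt{m(x_0)}$, which is a unit vector in $\ell^2(V,m)$. A direct computation shows $(e^{t\Delta}\widetilde\delta_{x_0},\widetilde\delta_{x_0})_{\ell^2(V,m)} = p_t(x_0,x_0)\,m(x_0)$, and since $-\Delta$ is nonnegative self-adjoint with spectrum contained in $[\mu,\infty)$, the spectral theorem gives
\[
p_t(x_0,x_0)\,m(x_0) \,=\, \int_\mu^\infty e^{-\lambda t}\,d\sigma(\lambda) \,\leq\, e^{-\mu t},
\]
where $\sigma$ is the spectral measure of $-\Delta$ associated with $\widetilde\delta_{x_0}$.

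Combining the two bounds, taking logarithms, and dividing by $T$, the limit $T \to \infty$ forces $-\mu + Kn/\rho \geq 0$, i.e.\ $\mu \leq Kn/\rho$; letting $\rho \to 1^-$ then gives $\mu \leq Kn$. The argument is essentially routine once one identifies the two ingredients, so there is no serious obstacle; the only points worth checking are the validity of the pointwise Li-Yau estimate for the heat kernel (immediate, since the heat equation is satisfied globally and $p_t(\cdot, x_0) > 0$ on the connected graph) and the harmless freedom to take $R \to \infty$ in that estimate at the fixed center $x_0$.
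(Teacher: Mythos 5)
Your proof is correct, but it takes a genuinely different route from the paper's. The paper invokes an Allegretto--Piepenbrink-type result (Theorem 3.1 of \cite{Haeseler11}): for every $\lambda\leq\mu$ there exists a \emph{positive generalized eigenfunction} $u$ with $\Delta u=-\lambda u$. Applying the Li--Yau inequality of Theorem~\ref{thm:weakballgre} with $q=-\lambda$ to this time-independent $u$ (letting $t\to\infty$, $R\to\infty$) immediately gives $(1-\rho)\Gamma(\sqrt u)/u+\lambda/2\leq Kn/(2\rho)$, and since the first term is nonnegative one gets $\lambda\leq Kn/\rho$, then $\rho\to1$. Your worry about $\ell^2$ eigenfunctions not existing is somewhat misplaced: [Haeseler11] provides a \emph{positive} solution to the eigenvalue equation that need not be in $\ell^2$, and positivity is all the Li--Yau inequality requires. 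That said, your approach is a perfectly valid alternative: you apply the time-dependent Li--Yau inequality directly to the on-diagonal heat kernel, integrate in time, and compare to the spectral upper bound $p_T(x_0,x_0)\,m(x_0)\leq e^{-\mu T}$. What your version buys is independence from the Allegretto--Piepenbrink machinery (at the modest cost of invoking the spectral theorem and the identification of the minimal heat kernel with the semigroup of the Dirichlet Laplacian), and it is self-contained modulo Theorem~\ref{thm:weakballgre}. What the paper's version buys is brevity: two lines once the positive solution is in hand. All the analytic steps in your argument check out, including the passage to $R\to\infty$ at the fixed center $x_0$, the computation $(e^{t\Delta}\widetilde\delta_{x_0},\widetilde\delta_{x_0})_{\ell^2(V,m)}=p_t(x_0,x_0)m(x_0)$, and the asymptotic bookkeeping as $T\to\infty$.
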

\begin{proof}We note that Theorem 3.1 in \cite{Haeseler11} implies that if
$\lambda\leq \mu,$ then there exists a positive solution $u$ to the equation $$\Delta u=-\lambda u.$$ Moreover, for positive time-independent solutions to the equation $\Delta u=q u,$ the Li-Yau inequality Theorem \ref{thm:weakballgre} reduces to
$$(1-\rho)\frac{\Gamma(\sqrt u)}{u}-\frac{q}{2}\leq \frac{Kn}{2\rho}, \ \ \ \ \ \ \ \ \forall \rho\in (0,1).$$ Setting $q = -\lambda$ it follows that there exists a positive solution $u$ for $\Delta u = -\lambda u$ and $\lambda\leq \mu$ that satisfies
 \begin{equation}\label{cheng}
 (1-\rho)\frac{\Gamma(\sqrt u)}{u}+\frac{\lambda}{2}\leq \frac{Kn}{2\rho}.
 \end{equation}
Noting that $(1-\rho)\frac{\Gamma(\sqrt u)}{u} > 0$ and taking the limit $\rho\to 1$, we conclude that
 $$\mu\leq Kn,$$
since \eqref{cheng} is true for all $\lambda\leq\mu$.
\end{proof}

\section{Davies-Gaffney-Grigor'yan Lemma}\label{Section DGG}

In this section we give a proof of our main result, the DGG Lemma (Theorem \ref{thm:Davies}). In order to do that we need some preparation.

\begin{defi}
We say $u:[0,\infty)\times V \to \mathbb{R}$ solves the Dirichlet heat equation on $\Omega \subset V$ if
\begin{equation}\left\{ \begin{array}{r@{=}l@{\quad\forall}l}
\frac{\partial}{\partial t}u(t,x) & \Delta_{\Omega}u(t,x) & x\in\Omega, t\geq 0,\\
u(0,x) & f(x)& x\in \Omega
\\
u(t,x) & 0 & x \notin \Omega, t\geq 0. \end{array}\right.
\end{equation}
where $\Delta_{\Omega}$ is the Dirichlet Laplace Operator on $\Omega$, see for instance \cite{Coulhon98}. The Dirichlet heat kernel on $\Omega,$ $p_t(x,y,\Omega),$ is defined as the solution of the Dirichlet heat equation on $\Omega$ with the initial condition $f(x)=\frac{1}{m(y)}\delta_y(x).$ For a general initial data $f(x),$ the solution can be written as $$u(t,x) = \sum_{y\in \Omega} p_t(x,y,\Omega) f(y) m(y).$$ It is easy to see that
$$p_t(x,y,\Omega) = \sum_{k=1}^{|\Omega|}e^{-\lambda_k(\Omega)t}\phi_k(x)\phi_k(y),$$ where $\{\phi_k\}_{k=1}^{|\Omega|}$ is an orthonormal basis of eigenfunction of $\Delta_\Omega$ and $|\Omega|$ is the number of vertices in $\Omega$.
\end{defi}
\begin{defi}
 Let $\{\Omega_i\}_{i=1}^{\infty}$ be an exhaustion of $V$ by finite subsets, i.e. $$\Omega_1\subset \Omega_{2}\subset\cdots\subset \Omega_{i}\subset\cdots \subset\Omega,\ \ \mathrm{and}\ \ \cup_{i=1}^{\infty}\Omega_i=V.$$ Then we define the minimal heat kernel on $G$ by
$$p_t(x,y) := \lim_{i\to \infty} p_t(x,y,\Omega_i).$$ The maximum/minimum principle implies that the limit exists and that $p_t$ is minimal, i.e. for any other fundamental solution $q_t$ we have $q_t\geq p_t$. This indicates that the definition of the minimal heat kernel is independent of the choice of the exhaustion.
\end{defi}

First we prove a variant of the integral maximum principle on graphs which was introduced on Riemannian manifolds by Grigor'yan \cite{Grigoryan94}. For simplicity, we denote by $K_t(t,x)$ the partial derivative w.r.t. the variable $t$ of the $C^1$ function $K(t,x).$
\begin{lemma}[Integral maximum principle for finite subsets]\label{l:monotonicity Delmotte finite}
Let $u:[0,\infty)\times V\to\R$ solve the Dirichlet heat equation on $\Omega\subset V$ for some finite $\Omega$ and let $\mu_1=\mu_1(\Omega)$ be the first Dirichlet eigenvalue of $\Omega$.
Suppose that $K(t,x)$ is a nonnegative and nonincreasing $C^1-$function in $t$ and there exists a constant $\gamma\in [0,1]$ such that for any $t\geq 0,$ $x\sim y$ ($x,y\in V$)
\begin{eqnarray}\label{e:K(t,x) finite}&&\left(K(t,x)+K(t,y)-2(1-\gamma)\sqrt{K(t,x)K(t,y)}\right)^2\nonumber\\
&\leq& \left(\frac{1}{D_m}K_t(t,x)-2\gamma K(t,x)\right)\left(\frac{1}{D_m}K_t(t,y)-2\gamma K(t,y)\right),\end{eqnarray} then  $$e^{2(1-\gamma)\mu_1 t}I(t):= e^{2(1-\gamma)\mu_1 t}\sum_{x\in
\Omega}K(t,x)u^2(t,x)m(x),$$ is  nonincreasing in $t\in [0,\infty)$.
\end{lemma}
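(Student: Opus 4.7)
The plan is to show $\frac{d}{dt}\bigl(e^{2(1-\gamma)\mu_1 t} I(t)\bigr) \leq 0$ by differentiating, performing a summation by parts, introducing $v = \sqrt{K}\, u$ to isolate a favorable quadratic form, and reducing matters to an edgewise inequality equivalent to the hypothesis \eqref{e:K(t,x) finite}.

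First I would compute, using $\partial_t u = \Delta u$ on $\Omega$ and $u|_{V\setminus\Omega} = 0$,
\begin{equation*}
I'(t) = \sum_x K_t u^2 m(x) + 2\sum_x K u\, \Delta u\, m(x).
\end{equation*}
A summation by parts (rigorous because $\Omega$ is finite) and the symmetry of $\mu_{xy}$ rewrite the second term as $-\sum_{x,y}\mu_{xy}\bigl(K(x)u(x) - K(y)u(y)\bigr)(u(x)-u(y))$. With $v = \sqrt{K}\,u$, a direct expansion yields the algebraic identity
\begin{equation*}
\bigl(K(x)u(x) - K(y)u(y)\bigr)(u(x)-u(y)) = (v(x)-v(y))^2 - (\sqrt{K(x)}-\sqrt{K(y)})^2 u(x)u(y).
\end{equation*}

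Second, since $v$ inherits Dirichlet boundary values from $u$ on $\Omega$, the Rayleigh quotient gives $\sum_x \Gamma(v)\, m(x) \geq \mu_1 I(t)$. Splitting $2\sum_x\Gamma(v)m(x) = 2\gamma\sum_x\Gamma(v)m(x) + 2(1-\gamma)\sum_x\Gamma(v)m(x)$ and applying Rayleigh only to the $(1-\gamma)$-piece I arrive at
\begin{equation*}
I'(t) + 2(1-\gamma)\mu_1 I(t) \leq \sum_x K_t u^2 m(x) - \gamma\!\sum_{x,y}\mu_{xy}(v(x)-v(y))^2 + \sum_{x,y}\mu_{xy}(\sqrt{K(x)}-\sqrt{K(y)})^2 u(x)u(y).
\end{equation*}
Using $K_t \leq 0$ together with $m(x) \geq \deg(x)/D_m$, the first term distributes over edges as $\sum_x K_t u^2 m(x) \leq \tfrac{1}{2D_m}\sum_{x,y}\mu_{xy}[K_t(x) u^2(x) + K_t(y) u^2(y)]$.

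It then suffices to show, for each edge $x \sim y$,
\begin{equation*}
\bigl[\gamma K(x) - \tfrac{K_t(x)}{2D_m}\bigr]u^2(x) + \bigl[\gamma K(y) - \tfrac{K_t(y)}{2D_m}\bigr]u^2(y) \geq \bigl[K(x)+K(y) - 2(1-\gamma)\sqrt{K(x)K(y)}\bigr] u(x)u(y).
\end{equation*}
If $u(x)u(y) \leq 0$ the inequality is trivial: both coefficients on the left are nonnegative (since $K \geq 0$ and $K_t \leq 0$), while the bracket on the right equals $(\sqrt{K(x)}-\sqrt{K(y)})^2 + 2\gamma\sqrt{K(x)K(y)} \geq 0$. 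If $u(x)u(y) \geq 0$, AM-GM bounds the left below by $2\sqrt{[\gamma K(x) - K_t(x)/(2D_m)][\gamma K(y) - K_t(y)/(2D_m)]}\,u(x)u(y)$; squaring reduces the desired estimate to $[2\gamma K(x) - K_t(x)/D_m][2\gamma K(y) - K_t(y)/D_m] \geq [K(x)+K(y) - 2(1-\gamma)\sqrt{K(x)K(y)}]^2$, which is precisely \eqref{e:K(t,x) finite}. The main obstacle is spotting that the hypothesis has been engineered to match exactly the AM-GM bound obtained after distributing the $K_t$-term via $D_m$; modulo this observation, the argument reduces to routine manipulations.
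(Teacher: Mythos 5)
Your proof is correct and follows essentially the same route as the paper's: differentiate $I$, symmetrize the Laplacian term over edges, distribute the $K_t$-term via $m(x)\geq\deg(x)/D_m$, reduce to an edgewise quadratic form in $u(x),u(y)$ whose nonpositivity is exactly the discriminant condition \eqref{e:K(t,x) finite}, and invoke the Rayleigh characterization of $\mu_1$ for the $(1-\gamma)$-piece. The only differences are cosmetic: you algebraically isolate $\Gamma(v)$ up front via the identity $(Ku)(x)u(x)-\cdots$, split it into $\gamma$ and $(1-\gamma)$ pieces, and apply Rayleigh early, while the paper keeps all edgewise terms together, subtracts $2(1-\gamma)(v(x)-v(y))^2$, and applies Rayleigh at the end; both arrive at the identical edgewise quadratic. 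Your case split on the sign of $u(x)u(y)$ together with AM-GM actually fills in a detail the paper passes over with a one-line reference to \eqref{e:K(t,x) finite}.
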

\begin{proof}
Direct calculation shows that \begin{eqnarray*}I'(t)&=&\sum_{x\in
\Omega}K_t(t,x) u^2(t,x)m(x)+2\sum_{x\in
\Omega} \sum_{y\in V}
\mu_{xy}K(t,x)u(t,x)(u(t,y)-u(t,x)),\\
&=&\sum_{x\in V}K_t(t,x)
u^2(t,x)m(x)+2\sum_{x\in V} \sum_{y\in V}
\mu_{xy}K(t,x)u(t,x)(u(t,y)-u(t,x)).\end{eqnarray*}
Using \eqref{a:on Dm}, $K_t(t,x)\leq 0$ and the symmetry of $\mu_{xy},$ we conclude that
$$\sum_{x\in V}u^2(t,x)K_t(t,x)m(x)\leq\frac{1}{2D_m}\sum_{x,y\in V} \mu_{xy}(u^2(t,x)K_t(t,x)+u^2(t,y)K_t(t,y))$$ and
\begin{eqnarray*}&&2\sum_{x,y\in V}\mu_{xy}K(t,x)u(t,x)(u(t,y)-u(t,x))\\&=&\sum_{x,y\in V}\mu_{xy}(u(t,y)-u(t,x))(u(t,x)K(t,x)-u(t,y)K(t,y)).\end{eqnarray*}
Hence
\begin{eqnarray*}I'(t)&\leq&\frac12\sum_{x,y\in V}\mu_{xy}\left(u^2(t,x)(\frac{1}{D_m}K_t(t,x)-2K(t,x))+\right.\\&&2u(t,x)u(t,y)(K(t,x)+K(t,y)) \left.+u^2(t,y)(\frac{1}{D_m}K_t(t,y)-2K(t,y))\right)\\
&\leq&-2(1-\gamma) \frac12\sum_{x,y\in V}\mu_{xy}\left(u(t,x)\sqrt{K(t,x)}-u(t,y)\sqrt{K(t,y)}\right)^2\\
&\leq& -2(1-\gamma)\mu_1 I(t),
\end{eqnarray*} where we have used \eqref{e:K(t,x) finite} for the quadratic expression in $u(t,x)$ and $u(t,y)$ in the second inequality. The last inequality follows from the Rayleigh quotient characterization of the first Dirichlet eigenvalue (see for instance \cite{Coulhon98})
$$\mu_1=\inf_{\substack{f: \mathrm{supp}(f)\subseteq \Omega,\\ f\not\equiv 0}}\frac{\frac{1}{2}\sum_{x,y\in N_1(\Omega)}\mu_{xy}(f(x)-f(y))^2}{\sum_{x\in\Omega}m(x)f^2(x)},$$ where $N_1(\Omega) = \{x\in V| d(x,\Omega)\leq 1\}$ is the $1$-neighborhood of $\Omega$ and $\mathrm{supp}(f)=\{x\in V: f(x)\neq 0\}.$
In fact, the choice $f(x) = u(t,x)\sqrt{K(t,x)}$ yields
\begin{eqnarray*}
\mu_1 &\leq & \frac{1}{2}\frac{\sum_{x,y\in N_1(\Omega)}\left(u(t,x)\sqrt{K(t,x)}-u(t,y)\sqrt{K(t,y)}\right)^2}{\sum_{x\in\Omega}m(x)u^2(t,x)K(t,x)}\\
&=& \frac{1}{2}\frac{\sum_{x,y\in V}\mu_{xy}\left(u(t,x)\sqrt{K(t,x)}-u(t,y)\sqrt{K(t,y)}\right)^2}{\sum_{x\in \Omega}m(x)u^2(t,x)K(t,x)}
\end{eqnarray*}
and hence
$$\mu_1 I(t) \leq \frac{1}{2}\sum_{x,y\in V}\mu_{xy}\left(u(t,x)\sqrt{K(t,x)}-u(t,y)\sqrt{K(t,y)}\right)^2$$
This proves the Lemma.
\end{proof}

Using an exhaustion argument as in \cite[Corollary~13.2]{Li12}, we can extend the integral maximum principle to the whole graph.
\begin{lemma}[Integral maximum principle]\label{l:monotonicity Delmotte}
Let $u(t,x) = \sum_{y\in V}p_t(x,y)f(x)$ solve the heat equation on $[0,\infty)\times V$ for $f\in \ell^p(V,m), p\in[1,\infty],$ and $\mu$ be the greatest lower bound for the $\ell^2$-spectrum of the graph Laplacian.
Suppose that $K(t,x)$ is a nonnegative and nonincreasing $C^1-$function function in $t$ and there exists a constant $\gamma\in [0,1]$ such that for any $t\geq 0,$ $x\sim y$ ($x,y\in V$)
\begin{eqnarray}\label{e:K(t,x)}&&\left(K(t,x)+K(t,y)-2(1-\gamma)\sqrt{K(t,x)K(t,y)}\right)^2\nonumber\\
&\leq& \left(\frac{1}{D_m}K_t(t,x)-2\gamma K(t,x)\right)\left(\frac{1}{D_m}K_t(t,y)-2\gamma K(t,y)\right),\end{eqnarray} then  $$e^{2(1-\gamma)\mu t}I(t):= e^{2(1-\gamma)\mu t}\sum_{x\in
V}K(t,x)u^2(t,x)m(x),$$ is  nonincreasing in $t\in [0,\infty)$.
\end{lemma}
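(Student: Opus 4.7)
The plan is to reduce this to the finite-case integral maximum principle (Lemma \ref{l:monotonicity Delmotte finite}) by an exhaustion argument, following \cite[Corollary~13.2]{Li12}. Fix an exhaustion $\{\Omega_i\}_{i\geq 1}$ of $V$ by finite subsets. For each $i$, let $u_i:[0,\infty)\times V\to\R$ denote the solution of the Dirichlet heat equation on $\Omega_i$ with initial datum $f|_{\Omega_i}$, so that
$$u_i(t,x)=\sum_{y\in\Omega_i}p_t(x,y,\Omega_i)f(y)m(y),$$
and set $I_i(t):=\sum_{x\in \Omega_i}K(t,x)u_i^2(t,x)m(x)$. Since $K$ does not depend on $i$ and satisfies \eqref{e:K(t,x)} everywhere, Lemma \ref{l:monotonicity Delmotte finite} applied on each $\Omega_i$ yields that $e^{2(1-\gamma)\mu_1(\Omega_i)t}I_i(t)$ is nonincreasing in $t$, where $\mu_1(\Omega_i)$ is the first Dirichlet eigenvalue of $\Omega_i$.

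Two standard facts now drive the passage to the limit. First, the Dirichlet heat kernels satisfy $p_t(x,y,\Omega_i)\nearrow p_t(x,y)$ monotonically, hence $u_i(t,x)\to u(t,x)$ pointwise. Second, the sequence $\mu_1(\Omega_i)$ is nonincreasing and, by density of finitely supported functions in the form domain together with the variational characterization of $\mu$, converges to $\mu$. Thus, for any $0\leq t_1\leq t_2$, the finite-case inequality
$$e^{2(1-\gamma)\mu_1(\Omega_i)t_2}I_i(t_2)\leq e^{2(1-\gamma)\mu_1(\Omega_i)t_1}I_i(t_1)$$
will pass to the limit and give the desired monotonicity of $e^{2(1-\gamma)\mu t}I(t)$, provided one can justify $I_i(t)\to I(t)$.

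The main obstacle is precisely this last convergence, because for signed initial data $f$ the sequence $u_i$ is not monotone in $i$. The trick is a detour through $|f|\in\ell^p(V,m)$: applying Lemma \ref{l:monotonicity Delmotte finite} with initial datum $|f|$ produces a nonnegative Dirichlet solution $\tilde u_i$ with $\tilde u_i\nearrow \tilde u(t,x):=\sum_y p_t(x,y)|f(y)|m(y)$ by the maximum principle. Monotone convergence combined with the finite-case bound gives
$$\tilde I(t):=\sum_{x\in V}K(t,x)\tilde u^2(t,x)m(x)\leq e^{-2(1-\gamma)\mu t}\sum_{x\in V}K(0,x)f^2(x)m(x)=e^{-2(1-\gamma)\mu t}I(0),$$
so $\tilde I(t)<\infty$ in the only nontrivial case $I(0)<\infty$. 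Since $|u_i(t,x)|\leq \tilde u_i(t,x)\leq \tilde u(t,x)$ pointwise, the function $K(t,x)\tilde u^2(t,x)m(x)$ is an $i$-uniform summable dominant, and dominated convergence yields $I_i(t)\to I(t)$ for each fixed $t$.

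With these ingredients in hand, the proof concludes by taking $i\to\infty$ in the finite-case inequality: the factors $e^{2(1-\gamma)\mu_1(\Omega_i)t_j}$ tend to $e^{2(1-\gamma)\mu t_j}$ and the $I_i(t_j)$ tend to $I(t_j)$, so $e^{2(1-\gamma)\mu t_2}I(t_2)\leq e^{2(1-\gamma)\mu t_1}I(t_1)$, which is exactly the asserted monotonicity. The step I expect to be most delicate is producing the uniform dominant; everything else is either routine or a direct invocation of the finite-case lemma and the convergence $\mu_1(\Omega_i)\to\mu$.
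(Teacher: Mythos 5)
Your proof is correct and follows exactly the paper's exhaustion strategy: apply Lemma \ref{l:monotonicity Delmotte finite} on an exhausting sequence $\{\Omega_i\}$ of finite subsets, then let $i\to\infty$ using $\mu_1(\Omega_i)\to\mu$ and $u_i\to u$. In fact, you flesh out the convergence $I_i(t)\to I(t)$ --- via the auxiliary monotone sequence $\tilde u_i$ built from $|f|$, which yields a uniform summable dominant --- a step the paper's terse proof asserts but does not justify.
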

\begin{rem}The special case $\gamma=1$ in the integral maximum principle was already obtained in \cite{Delmotte99} for continous time random walks and in \cite{Coulhon98,Coulhon05} for the discrete time random walk on graphs. However, the case $\gamma<1$ is of particular interest since it allows us to recover the exponential factor in the first Dirichlet eigenvalue. This exponential factor is very important (see also Remark \ref{Remark Exp}) and also appears in the DGG Lemma, Theorem \ref{thm:Davies}, and the heat-kernel estimates, Theorem \ref{thm:heat kernel estimate}.
\end{rem}
\begin{proof}We consider an exhaustion of $V$ by finite subsets $\{\Omega_i\}_{i=1}^{\infty}$. Let $u_i(t,x)$ be the solution of the Dirichlet heat equation on $\Omega_i$ with the initial condition $u_i(0,\cdot)=u|_{\Omega_i}(0,\cdot).$ By Lemma \ref{l:monotonicity Delmotte finite} for any finite $\Omega_i,$ $$t\mapsto e^{2(1-\gamma)\mu_1(\Omega_i) t}\sum_{x\in
\Omega_i}K(t,x)u_i^2(t,x)m(x)$$ is nonincreasing in $t$. Passing to the limit $i\to \infty$ we obtain the result since $\mu_1(\Omega_i)\to \mu$ and $u_i\to u.$
\end{proof}
\begin{rem}
By setting $K(t,x)=e^{2\eta(t,x)}=e^{2\eta(t,d(x))}$ where $d(x)=d(x,B),$ the distance function to some subset $B$ of $V,$ the equation \eqref{e:K(t,x)} is equivalent to
\begin{equation}\label{e:for chi}(\chi(\eta(t,x)-\eta(t,y))+\gamma)^2\leq \left(\frac{1}{D_m}\eta_t(t,x)-\gamma\right)\left(\frac{1}{D_m}\eta_t(t,y)-\gamma\right),\end{equation}
where $\chi(s)=\cosh(s)-1$.
\end{rem}
We want to use the integral maximum principle to prove the DGG Lemma. For that we need to find a non-trivial solution to \eqref{e:K(t,x)} or \eqref{e:for chi}. Recall that on Riemannian manifolds in order to apply the integral maximum principle, one needs to find a non-trivial solution to
\begin{equation}\label{manifoldeq}
\frac{\partial\eta}{\partial t}+\frac{1}{2}|\nabla\eta|^2\leq0.
\end{equation}
In this case $\eta=\frac{d^2}{2t}$ is a solution since the distance function $d$ satisfies $|\nabla d|\leq 1$ \cite{Grigoryan94}. Noting that $\chi(s)$ behaves like $\frac{s^2}{2}$ for small $s$ and setting $\gamma = 0, D_m=1$, one observes the obvious correspondence between \eqref{e:for chi} and \eqref{manifoldeq}. However, it is easy to see that $\frac{d^2}{2t}$ is not a solution to \eqref{e:for chi} for small $t$ (or more precisely $t/d$ small).
Still we want to find a non-trivial solution to \eqref{e:for chi} which behaves like $\frac{d^2}{2t}$ except for $t/d$ small.

In order to find such a solution of \eqref{e:for chi} we consider the Legendre associate
\begin{equation}\label{e:zeta def}\zeta(t,d) = \max_{\lambda\geq 0}\{d\lambda-\chi(\lambda)t\}\end{equation}
for any $t\geq 0$ and $d\geq 0$. Then
$$\zeta(t,d)=d\arcsinh\left(\frac{d}{t}\right)-\sqrt{d^2+t^2}+t,$$
and
\begin{equation}\label{e:partial t zeta}\frac{\partial }{\partial t}\zeta(t,d)=-\chi(\lambda(t,d))\end{equation}
where $\lambda(t,d)=\arcsinh(\frac{d}{t})$ is the value of $\lambda$ which attains the maximum in \eqref{e:zeta def}. The Legendre associate $\zeta$ was already used by Davies, Pang and Delmotte to obtain heat-kernel estimates \cite{Davies93,Pang93, Delmotte99}.

We have the following elementary lemma:
\begin{lemma}\label{l:convexity}
For any fixed $t\in (0,\infty),$ the function
$\zeta(t,d)=d\arcsinh\left(\frac{d}{t}\right)-\sqrt{d^2+t^2}+t$ is
increasing and convex in $d\in(0,\infty).$
\end{lemma}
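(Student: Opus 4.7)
The plan is to proceed by direct computation of the first two derivatives of $\zeta(t,d)$ with respect to $d$, since both claims (monotonicity and convexity in $d$) are pointwise inequalities on these derivatives.

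First I would compute $\partial_d \zeta(t,d)$. Differentiating the three terms of $\zeta(t,d)=d\arcsinh(d/t)-\sqrt{d^2+t^2}+t$, using $\frac{d}{ds}\arcsinh(s)=1/\sqrt{1+s^2}$, gives
\[
\partial_d \zeta(t,d) \;=\; \arcsinh\!\left(\tfrac{d}{t}\right) \;+\; \frac{d}{\sqrt{t^2+d^2}} \;-\; \frac{d}{\sqrt{t^2+d^2}} \;=\; \arcsinh\!\left(\tfrac{d}{t}\right),
\]
which is nonnegative for $d\geq 0$; this yields monotonicity. Differentiating once more gives
\[
\partial_d^2 \zeta(t,d) \;=\; \frac{1}{\sqrt{t^2+d^2}} \;>\; 0,
\]
which yields convexity.

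An alternative and conceptually cleaner route, which I would mention as a sanity check, is to invoke the Legendre representation \eqref{e:zeta def}. For each fixed $t>0$ the function $d\mapsto d\lambda - \chi(\lambda)t$ is affine in $d$, and a supremum over a family of affine functions is convex; the envelope theorem then identifies $\partial_d\zeta(t,d)$ with the maximizer $\lambda(t,d)=\arcsinh(d/t)\geq 0$, recovering both assertions at once.

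There is no real obstacle here; the only thing to verify is that the two ``middle'' terms in $\partial_d\zeta$ cancel exactly, which is a routine application of the chain rule. Everything else follows from elementary properties of $\arcsinh$.
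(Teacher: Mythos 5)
Your proof is correct and is essentially the same as the paper's: both compute $\partial_d\zeta=\arcsinh(d/t)$ and $\partial_d^2\zeta=1/\sqrt{t^2+d^2}$, with the only cosmetic difference being that the paper first uses the homogeneity $\zeta(t,d)=t\,\zeta(1,d/t)$ to reduce to $t=1$, while you differentiate directly in $d$. The Legendre-envelope remark you add is a nice sanity check but plays no role in either proof.
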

\begin{proof}
Since $\zeta(t,d)=t\zeta(1,\frac{d}{t}),$ it suffices to show that
$\zeta(1,d)$ is convex. An elementary calculation yields the first and
second derivative with respect to $d$,
$$\zeta'(1,d)=\arcsinh(d)\geq 0,$$
$$\zeta''(1,d)=\frac{1}{\sqrt{d^2+1}}\geq 0.$$ This proves the
lemma.
\end{proof}
Moreover, one can show that \cite{Delmotte99}
\begin{equation}\label{e:zeta estimate}
\left\{\begin{array}{ll}
\zeta(t,d)\leq \frac{d^2}{2t}, & \text{for } t\geq0 \\
\zeta(t,d)\geq \sigma\arcsinh(\sigma^{-1})\frac{d^2}{2t},&\mbox{for}\; t\geq\sigma d.\\
\end{array} \right.
\end{equation}

The estimates in \eqref{e:zeta estimate} suggest that $\zeta$ is a good candidate for a solution of \eqref{e:for chi} since it behaves like $\frac{d^2}{2t}$ for $d/t$ small.  Indeed the next lemma shows that $\zeta$ is a solution of \eqref{e:for chi} up to the rescaling and shifting of the time.

\begin{lemma}\label{l:1-Lipschitz}
For any $0<\gamma\leq 1$ there exists a constant $\alpha(\gamma)\geq 1$ such that
\begin{equation}\label{e:K in lemma}K(t,x):=e^{2\zeta(\alpha D_m t+\frac12,d(x))}\end{equation} is nonincreasing in
$t\in [0,\infty)$ and satisfies \eqref{e:K(t,x)} where $d(x)$ is a distance function to some subset $B$ and $\zeta$ is defined in \eqref{e:zeta def}.
\end{lemma}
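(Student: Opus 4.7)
The plan is to work with the equivalent form \eqref{e:for chi} for $\eta(t,x) := \zeta(\tau, d(x))$ where $\tau := \alpha D_m t + \tfrac{1}{2}$. Monotonicity of $K$ is immediate: by \eqref{e:partial t zeta} we have $\zeta_\tau = -\chi(\lambda) \leq 0$ where $\lambda(\tau,d) := \arcsinh(d/\tau)$, hence $\eta_t \leq 0$ and $K_t = 2\eta_t K \leq 0$.

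The core of the argument is \eqref{e:for chi}. Writing $\lambda_x := \lambda(\tau,d(x))$ and $\lambda_y := \lambda(\tau,d(y))$, the chain rule gives $\frac{1}{D_m}\eta_t(t,x) = -\alpha\chi(\lambda_x)$, so \eqref{e:for chi} becomes
\begin{equation*}
\bigl(\chi(\zeta(\tau,d(x))-\zeta(\tau,d(y))) + \gamma\bigr)^2 \leq (\alpha\chi(\lambda_x) + \gamma)(\alpha\chi(\lambda_y) + \gamma).
\end{equation*}
Here I would invoke the key discreteness of the setting: $d$ takes only integer values, so $x\sim y$ forces $d(y)-d(x)\in\{-1,0,1\}$. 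The case $d(x)=d(y)$ is trivial. For $d(y)=d(x)+1$ (the other case is symmetric), convexity of $\zeta(\tau,\cdot)$ from Lemma \ref{l:convexity} together with the mean value theorem, combined with the identity $\partial_d\zeta = \arcsinh(d/\tau) = \lambda$, yield $\zeta(\tau,d(y))-\zeta(\tau,d(x)) \in [\lambda_x,\lambda_y]$. Since $\chi$ is even and nondecreasing on $[0,\infty)$, it suffices to prove
\begin{equation*}
(\chi(\lambda_y)+\gamma)^2 \leq (\alpha\chi(\lambda_x)+\gamma)(\alpha\chi(\lambda_y)+\gamma).
\end{equation*}

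I would then split on $d(x)=0$ versus $d(x)\geq 1$. If $d(x)=0$, then $\chi(\lambda_x)=0$ and the display collapses to $\chi(\lambda_y)+2\gamma \leq \alpha\gamma$; here the shift by $\tfrac{1}{2}$ pays off, because $\tau\geq\tfrac{1}{2}$ forces $\lambda_y = \arcsinh(1/\tau)\leq \arcsinh 2$, hence $\chi(\lambda_y)\leq\sqrt{5}-1$, and the inequality holds provided $\alpha\geq 2+(\sqrt{5}-1)/\gamma$. If $d(x)\geq 1$, the identity $\chi(\arcsinh z) = z^2/(\sqrt{1+z^2}+1)$ applied at $z = d(x)/\tau$ and $z=(d(x)+1)/\tau$ gives $\chi(\lambda_y)/\chi(\lambda_x) \leq (d(x)+1)^2/d(x)^2 \leq 4$; substituting $\chi(\lambda_x)\geq\chi(\lambda_y)/4$ reduces the display to $\chi(\lambda_y)+2\gamma \leq \tfrac{\alpha^2}{4}\chi(\lambda_y)+\tfrac{5\alpha\gamma}{4}$, which is valid whenever $\alpha\geq 2$. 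The choice $\alpha(\gamma):=2+(\sqrt{5}-1)/\gamma$ covers both subcases and obviously satisfies $\alpha\geq 1$.

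The main obstacle is the boundary case $d(x)=0$: there $\chi(\lambda_x)$ vanishes and the right hand side degenerates, so one needs a uniform upper bound on $\chi(\lambda_y)$. This is precisely what forces the shift $\tau=\alpha D_m t+\tfrac{1}{2}$ rather than $\alpha D_m t$, and it also explains why $\gamma\to 0$ cannot be accommodated — the constant $\alpha(\gamma)$ blows up, in agreement with Remark~1.1(b).
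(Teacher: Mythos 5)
Your proof is correct and follows essentially the same strategy as the paper's: reduce to the equivalent condition \eqref{e:for chi}, use the identity $\partial_t\zeta = -\chi(\lambda)$, exploit the integrality of $d$ to constrain $|d(x)-d(y)|\le 1$, and then split on whether both $d(x),d(y)\ge 1$ or one of them vanishes, with the time shift by $\tfrac12$ engineered precisely to uniformly bound $\chi(\lambda(\tau,1))\le\sqrt5-1$ in the boundary case.

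The one genuine (minor) difference is in the interior case. The paper, after bounding the increment $|\eta(t,x)-\eta(t,y)|\le\lambda_{\max}$, discards the factor $\gamma$ and simply demands $\chi(\lambda_{\max})/\chi(\lambda_{\min})\le\alpha$, hence $\alpha\ge 4$. You instead substitute the ratio bound $\chi(\lambda_{\max})\le 4\chi(\lambda_{\min})$ directly into the quadratic target inequality and expand, obtaining the weaker requirement $\alpha\ge 2$; this lets you take $\alpha(\gamma)=2+(\sqrt5-1)/\gamma$ rather than the paper's $\max\{4,\,2+(\sqrt5-1)/\gamma\}$, a mild improvement for $\gamma$ near $1$. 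You also derive the increment bound $\zeta(\tau,d(y))-\zeta(\tau,d(x))\in[\lambda_x,\lambda_y]$ via convexity and the mean value theorem rather than the paper's Legendre-duality argument, but these are equivalent in substance. In short: same route, slightly sharper bookkeeping.
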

\begin{rem} One can consider an arbitrary time shift in \eqref{e:K in lemma}. However this does not give new insights and the choice $1/2$ leads to nice constants in our results.
\end{rem}
\begin{proof}
Set $\eta(t,x)=\zeta(\alpha D_m t+\frac12, d(x)).$ Using \eqref{e:partial t zeta}, we can rewrite equation \eqref{e:for chi} in the form
\begin{eqnarray}\label{e:simplified eq}(\chi(\eta(t,x)-\eta(t,y))+\gamma)^2&\leq& \left[\alpha \chi(\lambda(\alpha D_m t+\frac12,d(x)))+\gamma\right]\\&&\times \left[\alpha \chi(\lambda(\alpha D_m t+\frac12,d(y)))+\gamma\right].\nonumber \end{eqnarray}
Note that we have to prove \eqref{e:simplified eq} only for $x\sim y$. It is obvious that \eqref{e:simplified eq} is satisfied if $d(x) = d(y)$. By the symmetry of $x$ and $y$, we may assume w.l.o.g. that $d(x)>d(y)$. We distinguish the following two cases.

\textit{\bf Case 1.} $1\leq d(y)<d(x).$\\
First we observe that
$$0<\eta(t,x)-\eta(t,y)=\zeta(\alpha D_m t+\frac12, d(x))-\zeta(\alpha D_m t+\frac12, d(y))\leq \lambda(\alpha D_m t+\frac12,d(x)).$$
This can be seen as follows: Since $d(x) > d(y)$, it foloows from Lemma \ref{l:convexity} that
$$\zeta(\alpha D_m t + \frac{1}{2},d(x))\geq \zeta(\alpha D_m t + \frac{1}{2},d(y)).$$
By the definition of $\zeta$ we have
\begin{eqnarray*}
0 &\leq& \zeta(\alpha D_m t + \frac{1}{2},d(x))- \zeta(\alpha D_m t + \frac{1}{2},d(y))\\
&=& d(x)\lambda(x)-(\alpha D_m t + \frac{1}{2})\chi(\lambda(x)) - d(y)\lambda(y)+(\alpha D_m t + \frac{1}{2})\chi(\lambda(y))
\end{eqnarray*}
where $\lambda(x):=\lambda(\alpha D_m t + \frac{1}{2},d(x))$ and $\lambda(y):=\lambda(\alpha D_m t + \frac{1}{2},d(y))$ are the values of $\lambda$ that achieve the maximum in the definition of $\zeta$at the time $\alpha D_m t + \frac{1}{2}$ for $d=d(x)$ and $d=d(y)$ respectively.\\
Since
\begin{eqnarray*}
\zeta(\alpha D_m t + \frac{1}{2},d(y)) &=& \max_{\lambda\geq 0} \left\{d(y)\lambda -\chi(\lambda)(\alpha D_m t + \frac{1}{2})\right\}\\
&=& d(y)\lambda(y) -\chi(\lambda(y))(\alpha D_m t + \frac{1}{2})\\
&\geq & d(y)\lambda(x)-\chi(\lambda(x))(\alpha D_m t + \frac{1}{2})
\end{eqnarray*}
we have
\begin{eqnarray*}
0 &\leq & \zeta(\alpha D_m t + \frac{1}{2},d(x))-\zeta(\alpha D_m t + \frac{1}{2},d(y))\\
& \leq & (d(x)-d(y))\lambda(x) = \lambda(x)
\end{eqnarray*} where the last equality holds since $x\sim y$ and $d(x)>d(y)$.
Thus it suffices to find some constant $\alpha \geq 1$, such that
$$\chi(\lambda(\alpha D_m t+\frac12,d(x)))+\gamma\leq \alpha\chi(\lambda(\alpha D_m t+\frac12,d(y)))+\gamma$$ holds, or equivalently
$$\frac{\chi(\lambda(\alpha D_m t+\frac12,d(x)))}{\chi(\lambda(\alpha D_m t+\frac12,d(y)))}\leq \alpha.$$
By the discreteness of the distance function, i.e. $d(x)\in \N$, and $d(x)>d(y)\geq 1$, and $x\sim y,$ it folows that  $d(x)\leq 2 d(y).$ This yields
\begin{eqnarray*}\frac{\chi(\lambda(\alpha D_m t+\frac12,d(x)))}{\chi(\lambda(\alpha D_m t+\frac12,d(y)))}&=&
\frac{\sqrt{1+\frac{d(x)^2}{(\alpha D_m t+\frac12)^2}}-1}{\sqrt{1+\frac{d(y)^2}{(\alpha D_m t+\frac12)^2}}-1}\\&\leq& \frac{\sqrt{1+\frac{4d(y)^2}{(\alpha D_m t+\frac12)^2}}-1}{\sqrt{1+\frac{d(y)^2}{(\alpha D_m t+\frac12)^2}}-1}\\&\leq& 4.\end{eqnarray*} This proves the result in the first case by setting $\alpha \geq 4.$
Note that for this case we neither used the time shift nor assumed that $\gamma\neq 0$.
\\\textit{\bf Case 2.} $d(y)=0$ and $d(x)=1.$\\
In this case, equation \eqref{e:simplified eq} is equivalent to
\begin{equation}\label{chi3}(\chi (\zeta(\alpha D_m t+\frac12,1))+\gamma)^2\leq \gamma(\alpha \chi(\lambda(\alpha D_m t+\frac12,1))+\gamma).\end{equation} Note that \eqref{chi3} is false for $\gamma =0$. That is why we have to assume $\gamma>0$.
By definition $\zeta(\alpha D_m t+\frac12,1)\leq \lambda(\alpha D_m t+\frac12,1),$ which implies
$$(\chi (\zeta(\alpha D_m t+\frac12,1))+\gamma)^2\leq (\chi(\lambda(\alpha D_m t+\frac12,1))+\gamma)^2.$$
Moreover since we introduced the time shift $1/2,$ we have $\chi(\lambda(\alpha D_m t+\frac12,1))=\sqrt{1+\frac{1}{(\alpha D_m t+\frac12)^2}}-1\leq \sqrt5-1$ for any $t\geq 0.$
Choosing $\alpha \geq\frac{\sqrt5-1}{\gamma}+2,$ we have
\begin{eqnarray*}
\left(\chi(\zeta(\alpha D_m t + \frac{1}{2},1))+\gamma\right)^2 &\leq & \left(\chi(\lambda(\alpha D_m t+\frac12,1))+\gamma\right)^2\\
&\leq & \gamma \left(\alpha \chi(\lambda(\alpha D_m t+\frac12,1))+\gamma\right).
\end{eqnarray*}
This proves the result in the second case. Hence the lemma follows by choosing $\alpha(\gamma) =\max\{4,\frac{\sqrt5-1}{\gamma}+2\}.$
\end{proof}

\begin{rem} Unfortunately, we cannot prove the lemma for $\gamma=0$. The reason is that in our proof the constant  $\alpha(\gamma)\to \infty$ as $\gamma\to 0$.
\end{rem}

Now, we are ready to prove the DGG Lemma for graphs.

\begin{proof}[Proof of Theorem \ref{thm:Davies}] For infinite subsets $B_1$ and $B_2,$ we can take an exhaustion by finite subsets. Since the estimates \eqref{e:Davies estimate} and \eqref{e: Davies gamma=1} are stable by passing to the limit of the exhaustion, it suffices to prove the theorem for finite subsets $B_1$ and $B_2.$ For the case $0<\gamma<1,$ we set
\begin{eqnarray*}f_i(t,x)&:=&\sum_{y\in B_i}p_t(x,y)m(y),\\
K_i(t,x)&:=&e^{2\zeta(\alpha D_m t+\frac12,d(x,B_i))},\ \ \ \ \ i=1,2\end{eqnarray*} where $\alpha=\alpha(\gamma)$ is the constant in Lemma \ref{l:1-Lipschitz}.
Lemma \ref{l:monotonicity Delmotte} and Lemma \ref{l:1-Lipschitz}
imply that for any $t\geq 0,$
$$e^{2(1-\gamma)\mu t}\sum_{x\in V}K_i(t,x)f_i^2(t,x)m(x)\leq \sum_{x\in
V}K_i(0,x)f_i^2(0,x)m(x).$$ Note that
$$f_i(0,x)=\sum_{y\in B_i}p_0(x,y)m(y)=\mathds{1}_{B_i}(x)$$ where
$\mathds{1}_{B_i}$ is the characterization function of $B_i,$ $i=1,2$. This yields
that
$$\sum_{x\in V}K_i(0,x)f_i^2(0,x)m(x)=m(B_i).$$ Hence
\begin{equation}\label{Davieseq}
e^{2(1-\gamma)\mu t}\sum_{x\in V}K_i(t,x)f_i^2(t,x)m(x)\leq m(B_i)\ \ {\rm for\ all \ } t\geq 0.
\end{equation}
By Lemma \ref{l:convexity}, $\zeta(t,\cdot)$ is increasing and convex in $d$.
Applying Jensen's inequality together with the triangle inequality implies that for any $t\geq 0$ and $x\in V$
\begin{eqnarray*}
\zeta\left(\alpha D_m t+\frac12,\frac{d(B_1,B_2)}{2}\right)
&\leq & \zeta\left(\alpha D_m t+\frac12,\frac{d(x,B_1)+d(x,B_2)}{2}\right)\\&\leq & \frac{1}{2}\left[\zeta(\alpha D_m t+\frac12,d(x,B_1))+\zeta(\alpha D_m t+\frac12,d(x,B_2))\right].
\end{eqnarray*}
This yields
$$e^{\zeta(2\alpha D_m t+1,d(B_1,B_2))}\leq \sqrt{K_1(t,x)K_2(t,x)}$$
since $\zeta(t,d)=t\zeta(1,\frac{d}{t}),$ and thus $$\zeta(2\alpha D_m t+1,d(B_1,B_2))=2\zeta\left(\alpha D_m t+\frac12,\frac{d(B_1,B_2)}{2}\right).$$
Hence
\begin{eqnarray*}
&&\sum_{x\in
V}e^{\zeta(2\alpha D_m t+1,d(B_1,B_2))}f_1(t,x)f_2(t,x)m(x)\\&\leq&\sum_{x\in
V}\sqrt{K_1(t,x)K_2(t,x)}f_1(t,x)f_2(t,x)m(x)\\
&\leq& \left(\sum_{x\in
V}K_1(t,x)f_1^2(t,x)m(x)\right)^{\frac12}\left(\sum_{x\in
V}K_2(t,x)f_2^2(t,x)m(x)\right)^{\frac12}\\
&\leq &e^{-2(1-\gamma)\mu t}\sqrt{m(B_1)m(B_2)},
\end{eqnarray*}
where we used Cauchy-Schwarz in the second and \eqref{Davieseq} in the third inequality.\\
In addition, by the semigroup property, the left-hand side can be
written as
\begin{eqnarray*}
&& \sum_{x\in V}e^{\zeta(2\alpha D_m t+1,d(B_1,B_2))}f_1(t,x)f_2(t,x)m(x)\\
&=& e^{\zeta(2\alpha D_m t+1,d(B_1,B_2))}\sum_{x\in V}\sum_{y\in
B_1}\sum_{z\in B_2}p_t(x,y)p_t(x,z)m(x)m(y)m(z)\\
&=& e^{\zeta(2\alpha D_m t+1,d(B_1,B_2))}\sum_{y\in B_1}\sum_{z\in
B_2}\left(\sum_{x\in V}p_t(y,x)p_t(x,z)m(x)\right)m(y)m(z)\\
&=& e^{\zeta(2\alpha D_m t+1,d(B_1,B_2))}\sum_{y\in B_1}\sum_{z\in
B_2}p_{2t}(y,z)m(y)m(z).
\end{eqnarray*}
Combining these results and rescaling the time by the factor $\frac12$, the result follows.

For the case $\gamma = 1$, i.e. the case when we do not have the exponential factor in $\mu$, we do not need to rescale and shift the time. In this case one can show that
$$\sum_{x\in V}\widetilde{K_i}(t,x)f_i^2(t,x)m(x),\,i=1,2$$ is non-increasing in $t\in [0,\infty)$ where
$$\widetilde{K_i}(t,x):=e^{\zeta(D_m t,d(x,B_i))}.$$
The same argument yields the result in this case.
\end{proof}

Using the properties of $\zeta,$ \eqref{e:zeta estimate}, we obtain the following corollary.
\begin{coro}\label{c:Davies lemma}
Let $p_t(x,y)$ be the minimal heat
kernel of the graph $G$ and $\beta>0.$ Then for any $0<\gamma< 1,$ there exist a constant $C_3(\gamma,\beta,D_m)$ such that for any subsets $B_1,B_2\subset G, \ t\geq  \beta d(B_1,B_2)\vee 1,$
\begin{eqnarray}\label{e:Davies estimate in coro}
&&\sum_{x\in B_1}\sum_{y\in B_2}p_t(x,y)m(x)m(y)\nonumber\\&\leq& e^{-(1-\gamma)\mu t}
\sqrt{m(B_1)m(B_2)}\exp\left(-C_3\frac{d^2(B_1,B_2)}{4t}\right).
\end{eqnarray}
Moreover, for the case $\gamma=1,$ we have for any $t\geq \beta d(B_1,B_2),$
\begin{equation*}
\sum_{x\in B_1}\sum_{y\in B_2}p_t(x,y)m(x)m(y)\leq
\sqrt{m(B_1)m(B_2)}\exp\left(-C\frac{d^2(B_1,B_2)}{4t}\right),
\end{equation*} where $C= C(\beta,D_m)=\beta\arcsinh(\frac{1}{D_m\beta}).$
\end{coro}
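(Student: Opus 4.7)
The plan is to apply Theorem \ref{thm:Davies} to obtain the bound involving $\zeta$ and then replace the $\zeta$-term by a Gaussian-type exponent $C d^2(B_1,B_2)/(4t)$, using the lower estimate for $\zeta$ recorded in \eqref{e:zeta estimate}. The only non-trivial work is to verify that, under the hypothesis $t \geq \beta d(B_1,B_2) \vee 1$, the first argument of $\zeta$ dominates its second argument by a factor at least $\sigma$, which is exactly the condition required by \eqref{e:zeta estimate}.

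For the case $0 < \gamma < 1$, write $d = d(B_1,B_2)$ and let $\alpha = \alpha(\gamma)$ be the constant from Lemma \ref{l:1-Lipschitz}. Theorem \ref{thm:Davies} gives
\[
\sum_{x\in B_1}\sum_{y\in B_2}p_t(x,y)m(x)m(y)\leq \sqrt{m(B_1)m(B_2)}\,e^{-(1-\gamma)\mu t}\exp\bigl(-\zeta(\alpha D_m t +1, d)\bigr).
\]
Set $\sigma = \alpha D_m \beta$. Since $t \geq \beta d$ we have $\alpha D_m t + 1 \geq \alpha D_m \beta d = \sigma d$, so by the second inequality in \eqref{e:zeta estimate},
\[
\zeta(\alpha D_m t + 1, d) \;\geq\; \sigma\arcsinh(\sigma^{-1})\,\frac{d^2}{2(\alpha D_m t+1)}.
\]
Since $t \geq 1$, we also have $\alpha D_m t + 1 \leq (\alpha D_m + 1)t$, which gives
\[
\zeta(\alpha D_m t + 1, d) \;\geq\; \frac{2\sigma\arcsinh(\sigma^{-1})}{\alpha D_m + 1}\cdot \frac{d^2}{4t} \;=:\; C_3(\gamma,\beta,D_m)\,\frac{d^2}{4t},
\]
which yields \eqref{e:Davies estimate in coro}.

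For the case $\gamma = 1$, Theorem \ref{thm:Davies} provides the bound with the factor $\exp(-\tfrac12 \zeta(D_m t, d))$; here we do not need the shift and can take the simpler argument $D_m t$. Setting $\sigma = D_m\beta$, the hypothesis $t \geq \beta d$ gives $D_m t \geq \sigma d$, so \eqref{e:zeta estimate} yields
\[
\tfrac12\zeta(D_m t, d) \;\geq\; \tfrac12\cdot D_m\beta\,\arcsinh\bigl(\tfrac{1}{D_m\beta}\bigr)\,\frac{d^2}{2 D_m t} \;=\; \beta\arcsinh\bigl(\tfrac{1}{D_m\beta}\bigr)\,\frac{d^2}{4t},
\]
which is exactly the claimed bound with $C = \beta\arcsinh(1/(D_m\beta))$. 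The main (and essentially only) subtlety is the correct choice of $\sigma$ in each case so as to invoke \eqref{e:zeta estimate}; once this is done the estimates are immediate consequences of Theorem \ref{thm:Davies}.
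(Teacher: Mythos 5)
Your proof is correct and is exactly the argument the paper intends (it leaves it implicit, saying only ``Using the properties of $\zeta$, \eqref{e:zeta estimate}, we obtain the following corollary''): apply Theorem \ref{thm:Davies}, then use the second inequality of \eqref{e:zeta estimate} with $\sigma = \alpha D_m\beta$ (respectively $\sigma = D_m\beta$) to convert the $\zeta$-term into a Gaussian exponent, using $t\geq 1$ to absorb the $+1$ in the time argument. Your resulting constants $C_3 = \frac{2\alpha D_m\beta\arcsinh(1/(\alpha D_m\beta))}{\alpha D_m+1}$ and $C = \beta\arcsinh(1/(D_m\beta))$ match exactly the values recorded in Remark \ref{Remark Exp}(c) and in the corollary statement.
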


\begin{rem}\label{Remark Exp}
\begin{enumerate}[(a)]\item For $\gamma=1$, a similar result was obtained in \cite{Coulhon98, Coulhon05} for the discrete time heat kernel on graphs.
\item
This result shows the importance of the case $\gamma<1$. Although we obtain the right constant in the exponential in $d^2/t$ for $\gamma=1$ and $t$ large (note that $\sigma\arcsinh(\sigma^{-1})\to 1$ as
$\sigma\to\infty$) we cannot recover the exponential factor in $\mu$. In contrast, for $\gamma<1$ we lose some constant in the exponential in $d^2/t$ but we are able to recover the exponential factor in $\mu$.  This is important since for large $t$ the right hand side for $\gamma<1$ goes to zero whereas the right hand side for $\gamma=1$ converges to a positive constant.
\item The constant $C_3$ in this corollary can be chosen as $$C_3=C_3(\gamma,\beta,D_m) = \frac{2 \alpha D_m\beta\arcsinh\left(\frac{1}{\alpha D_m\beta}\right)}{\alpha D_m+1},$$ where $\alpha=\alpha(\gamma)$ is the constant in Lemma \ref{l:1-Lipschitz}.
\end{enumerate}
\end{rem}
In particular, we have the following explicit estimate.
\begin{coro}
Let $G$ be an infinite graph, $D_m=1$ and $p_t(x,y)$ be the minimal heat
kernel of $G$. Then for any subsets $B_1,B_2\subset G$ and  $t\geq d(B_1,B_2)\vee 1,$
\begin{eqnarray}\label{e:Davies estimate in coro2}
&&\sum_{x\in B_1}\sum_{y\in B_2}p_t(x,y)m(x)m(y)\nonumber\\&\leq& e^{-(\frac{3-\sqrt{5}}{2})\mu t}
\sqrt{m(B_1)m(B_2)}\exp\left(-\frac{8 \arcsinh(1/4)}{5}\frac{d^2(B_1,B_2)}{4t}\right).
\end{eqnarray}
\end{coro}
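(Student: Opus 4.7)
The plan is to derive this as a direct specialization of Corollary \ref{c:Davies lemma}, after carefully choosing the free parameter $\gamma\in(0,1)$ so that both the $\mu t$-exponent and the constant in the $d^2/t$-exponent take the stated numerical values. We fix $\beta=1$ (so the hypothesis $t\geq \beta d(B_1,B_2)\vee 1$ reads as $t\geq d(B_1,B_2)\vee 1$) and $D_m=1$, which are our standing assumptions, and we are free to select any $\gamma\in(0,1)$.

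The idea is to solve $1-\gamma=\frac{3-\sqrt{5}}{2}$ for $\gamma$, which forces $\gamma=\frac{\sqrt{5}-1}{2}$. With this value, we compute from Lemma \ref{l:1-Lipschitz} that
\[
\alpha(\gamma) \;=\; \max\!\left\{4,\;\frac{\sqrt{5}-1}{\gamma}+2\right\} \;=\; \max\{4,\,2+2\}\;=\;4,
\]
since $\frac{\sqrt 5-1}{\gamma}=\frac{\sqrt 5-1}{(\sqrt 5-1)/2}=2$. Thus our choice of $\gamma$ is exactly at the boundary where the two candidates in the definition of $\alpha(\gamma)$ agree; this is the minimizing $\gamma$ making $\alpha$ as small as possible, which in turn produces the cleanest constant $C_3$.

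Next we substitute $\alpha=4$, $D_m=1$, $\beta=1$ into the explicit formula for $C_3$ given in Remark \ref{Remark Exp}(c):
\[
C_3(\gamma,\beta,D_m)\;=\;\frac{2\alpha D_m\beta\,\arcsinh\!\left(\frac{1}{\alpha D_m\beta}\right)}{\alpha D_m+1}\;=\;\frac{2\cdot 4\cdot 1\cdot 1\cdot \arcsinh(1/4)}{4+1}\;=\;\frac{8\arcsinh(1/4)}{5}.
\]
Plugging these numerical values into the conclusion \eqref{e:Davies estimate in coro} of Corollary \ref{c:Davies lemma} yields exactly \eqref{e:Davies estimate in coro2}.

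There is essentially no obstacle beyond bookkeeping: one only has to verify that $\gamma=(\sqrt 5-1)/2$ is indeed the correct scalar matching $1-\gamma=(3-\sqrt 5)/2$, and that this value lies at the equalizing point of the two expressions in $\alpha(\gamma)$. The mild subtlety is that using any other $\gamma\in(0,1)$ in Corollary \ref{c:Davies lemma} would also give a valid estimate but with a worse constant in one of the two exponentials; our choice optimizes the pair simultaneously.
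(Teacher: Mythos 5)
Your proof is correct and is exactly the computation the paper intends for this corollary (which it states as an immediate specialization of Corollary~\ref{c:Davies lemma} without written proof): take $\beta=1$, $D_m=1$, solve $1-\gamma=\frac{3-\sqrt5}{2}$ for $\gamma=\frac{\sqrt5-1}{2}$, obtain $\alpha(\gamma)=\max\{4,\,\frac{\sqrt5-1}{\gamma}+2\}=4$, and plug into the $C_3$ formula from Remark~\ref{Remark Exp}(c) to get $C_3=\frac{8\arcsinh(1/4)}{5}$. Your side remark that this $\gamma$ is the smallest value at which $\alpha=4$ is attained, hence optimizes the $\mu$-exponent given the minimal $\alpha$, is also correct.
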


\section{Applications of the Davies-Gaffney-Grigor'yan Lemma}\label{Section Applications DGG}
\subsection{Heat kernel estimates}
Combining the Harnack inequality, Theorem \ref{thm:Harnack}, and the DGG Lemma, Theorem \ref{thm:Davies}, we can now prove the heat kernel estimates for graphs satisfying the exponential curvature dimension inequality.

\begin{proof}[Proof of Theorem \ref{thm:heat kernel estimate}]
Since we have the DGG Lemma on graphs, we can closely follow the standard proof in the continuous case, see \cite{Li12}. Fix $x,y\in V$ and $\delta>0.$ Applying the Harnack inequality, Theorem \ref{thm:Harnack}, to the heat kernel $p_t(x,y)$ with $T_1=t$ and $T_2=(1+\delta )t$ yields
\begin{eqnarray*}p_t(x,y)&\leq& p_{(1+\delta)t}(x',y)(1+\delta)^{C_4}\exp\left(C_5\delta t+\frac{4m_{\mathrm{max}}d^2(x,x')}{(1-\rho)\delta t \mu_{\mathrm{min}}}\right)\\
&\leq&p_{(1+\delta)t}(x',y) (1+\delta)^{C_4}\exp\left(C_5\delta t+\frac{4m_{\mathrm{max}}}{(1-\rho)\delta  \mu_{\mathrm{min}}}\right),\ \ \ \ \ \forall x'\in B_x(\sqrt t),\end{eqnarray*} where $C_4=\frac{n}{1-\rho}, C_5=\frac{Kn}{\rho}.$
Summing over all $x'\in B_x(\sqrt t)$ yields
\begin{equation}\label{e:davies to heat 1}
m (B_x(\sqrt t)) p_t(x,y) \leq (1+\delta)^{C_4}\exp\left(C_5\delta t+\frac{4m_{\mathrm{max}}}{(1-\rho)\delta  \mu_{\mathrm{min}}}\right)\sum_{x'\in B_x(\sqrt t)}m(x')p_{(1+\delta)t}(x',y).
\end{equation}
Using again the Harnack inequality for the following positive solution to the heat equation, $$h(y,s)=\sum_{x'\in B_x(\sqrt t)}m(x')p_s(x',y),$$
and setting $T_1=(1+\delta)t, T_2=(1+2\delta)t$ yields
\begin{eqnarray*}
&& m(B_y(\sqrt{t}))\sum_{x'\in B_x(\sqrt t)}m(x')p_{(1+\delta)t}(x',y)\leq
\left(\frac{1+2\delta}{1+\delta}\right)^{C_4}\exp\left(C_5\delta t+\frac{4m_{\mathrm{max}}}{(1-\rho)\delta  \mu_{\mathrm{min}}}\right)\\&&\ \ \ \ \  \ \ \ \ \qquad \qquad \times \sum_{x'\in B_x(\sqrt t)}\sum_{y'\in B_y(\sqrt{t})}m(x')m(y')p_{(1+2\delta)t}(x',y').
\end{eqnarray*}
Together with \eqref{e:davies to heat 1} this yields
\begin{eqnarray*}
p_t(x,y)&\leq& (1+2\delta)^{C_4}\exp\left(2C_5\delta t+\frac{8m_{\mathrm{max}}}{(1-\rho)\delta  \mu_{\mathrm{min}}}\right)\\
&&\times \frac{1}{m (B_x(\sqrt t))m (B_y(\sqrt t))}\sum_{x'\in B_x(\sqrt t)}\sum_{y'\in B_y(\sqrt{t})}m(x')m(y')p_{(1+2\delta)t}(x',y').
\end{eqnarray*}

For $t\geq \beta d(x,y)\vee 1\geq \frac{1}{1+2\delta}(\beta d(x,y)\vee 1)\geq \frac{1}{1+2\delta}(\beta d(B_x(\sqrt t),B_y(\sqrt t))\vee 1)$  Corollary \ref{c:Davies lemma} implies that there exists $C_3(\gamma,\beta,D_m)$ such that
\begin{eqnarray*}
&&\sum_{x'\in B_x(\sqrt t)}\sum_{y'\in B_y(\sqrt t)}p_{(1+2\delta)t}(x',y')m(x')m(y')\\
&\leq & \exp(-(1-\gamma)\mu(1+2\delta)t)\sqrt{m(B_x(\sqrt t))m(B_y(\sqrt t))}\exp\left(-C_3\frac{d^2(B_x(\sqrt t),B_y(\sqrt t))}{4(1+2\delta)t}\right).
\end{eqnarray*}
Using this we obtain
\begin{eqnarray*}
p_t(x,y)&\leq& (1+2\delta)^{C_4}\exp\left(2C_5\delta t+\frac{8m_{\mathrm{max}}}{(1-\rho)\delta  \mu_{\mathrm{min}}}\right) \frac{1}{\sqrt{m (B_x(\sqrt t))m (B_y(\sqrt t))}}\\
&&\times\exp\left(-(1-\gamma)\mu(1+2\delta)t-C_3\frac{d^2(B_x(\sqrt t),B_y(\sqrt t))}{4(1+2\delta)t}\right).
\end{eqnarray*}

We observe the following
\[d(B_x(\sqrt t),B_y(\sqrt t))=\left\{\begin{array}{ll}
0, & \mathrm{if}\ d(x,y)\leq 2\lfloor\sqrt t\rfloor,\\
d(x,y)-2\lfloor \sqrt t\rfloor, & \mathrm{if}\ d(x,y)> 2\lfloor\sqrt t\rfloor,\\
\end{array} \right.\] where $\lfloor\sqrt t\rfloor$ is the greatest integer less than or equal to $\sqrt t.$
It follows that
\[d(B_x(\sqrt t),B_y(\sqrt t))\geq\left\{\begin{array}{ll}
0, & \mathrm{if}\ d(x,y)\leq 2\lfloor\sqrt t\rfloor,\\
d(x,y)-2\sqrt t, & \mathrm{if}\ d(x,y)> 2\lfloor\sqrt t\rfloor.\\
\end{array} \right.\]
Hence we have
$$-\frac{d^2(B_x(\sqrt t),B_y(\sqrt t))}{4(1+2\delta)t}=0\leq 1-\frac{d^2(x,y)}{4(1+4\delta)t},\ \ \ \mathrm{if}\ d(x,y)\leq 2\lfloor \sqrt t\rfloor$$
and
\begin{eqnarray*}
-\frac{d^2(B_x(\sqrt t),B_y(\sqrt t))}{4(1+2\delta)t}&\leq& -\frac{(d(x,y)-2\sqrt t)^2}{4(1+2\delta)t}\\
&\leq & -\frac{d^2(x,y)}{4(1+4\delta)t}+\frac{1}{2\delta},\ \ \ \mathrm{if}\ d(x,y)>2\lfloor\sqrt t\rfloor.
\end{eqnarray*}

Combining all above there exists a constant $C=e^{C_3}$ such that
\begin{eqnarray*}
p_t(x,y) &\leq & C(1+2\delta)^{C_4}\exp\left(2C_5\delta t+\frac{8m_{\mathrm{max}}}{(1-\rho)\delta  \mu_{\mathrm{min}}}+\frac{C_3}{2\delta}-(1-\gamma)\mu(1+2\delta)t\right)\\
&&\times \frac{1}{\sqrt{m (B_x(\sqrt t))m (B_y(\sqrt t))}}\exp\left(-C_3\frac{d^2(x,y)}{4(1+4\delta)t}\right).
\end{eqnarray*}

We consider two cases.

\textit{\bf Case 1.} $C_5t\geq 1.$ We choose $2\delta=\frac{\epsilon}{\sqrt{C_5 t}}.$ This yields
\begin{eqnarray*}p_t(x,y)&\leq& C(1+\frac{\epsilon}{\sqrt{C_5 t}})^{C_4}\exp\left[\sqrt{C_5t}\left(\frac{C_3}{\epsilon}+\frac{16m_{\mathrm{max}}}{(1-\rho)  \mu_{\mathrm{min}}\epsilon}+\epsilon\right)\right]\\
&&\times
\frac{1}{\sqrt{m(B_x(\sqrt t))m(B_y(\sqrt t))}}\exp\left(-(1-\gamma)\mu t-\frac{C_3d^2(x,y)}{4(1+\frac{2\epsilon}{\sqrt{C_5t}})t}\right).
\end{eqnarray*}
Hence
\begin{eqnarray*}p_t(x,y)&\leq& C(1+\epsilon)^{C_4}\exp\left[\sqrt{C_5t}\left(\frac{C_3}{\epsilon}+\frac{16m_{\mathrm{max}}}{(1-\rho)  \mu_{\mathrm{min}}\epsilon}+\epsilon\right)\right]\\
&&\times
\frac{1}{\sqrt{m(B_x(\sqrt t))m(B_y(\sqrt t))}}\exp\left(-(1-\gamma)\mu t-\frac{C_3d^2(x,y)}{4(1+2\epsilon)t}\right).
\end{eqnarray*}

\textit{\bf Case 2.} $C_5t<1.$ We choose $2\delta=\epsilon.$ This yields
\begin{eqnarray*}p_t(x,y)&\leq& C(1+\epsilon)^{C_4}\exp\left(\epsilon \sqrt{C_5t}+\frac{C_3}{\epsilon}+\frac{16m_{\mathrm{max}}}{(1-\rho)  \mu_{\mathrm{min}}\epsilon}\right)\\
&&\times
\frac{1}{\sqrt{m (B_x(\sqrt t))m(B_y(\sqrt t))}}\exp\left(-(1-\gamma)\mu t-\frac{C_3d^2(x,y)}{4(1+2\epsilon)t}\right).
\end{eqnarray*}
Choosing some fixed value for $\rho\in (0,1)$, say for instance $\rho = 1/2$ completes the proof.
\end{proof}


As an easy corollary of Theorem \ref{thm:heat kernel estimate} we obtain heat-kernel estimate for finite graphs, see  \cite{Lee95} for a similar result on manifolds. For a finite graph $G$ on $N$ vertices, we order the eigenvalues of $G$ in the non-decreasing way: $0=\lambda_1 <\lambda_2\leq \ldots \leq \lambda_{N}.$ Note that the heat kernel converges in this estimate to $\frac{1}{V}$ in an explicit way, where $V=m(G)$ is the volume of the whole graph.
\begin{coro}\label{coro: heat kernel estimate} Let $G$ be a finite graph on $N$ vertices and $D:= \max_{x,y\in V} d(x,y)$ its diameter. If $G$ satisfies the exponential curvature dimension inequality $\CDE(n,-K),$ then for all $0<\rho<1$,
$$\left|p_t(x,y)-\frac{1}{V}\right|\leq \frac{1}{V}\left(C_1\exp\left(C_2\sqrt{Kn}D\right)-1\right) \exp(\lambda_2D^2-\lambda_2t)$$ for any $t\geq D^2,$ where $\lambda_2$ is the smallest nontrivial eigenvalue of $G$ and the constants $C_1$ and $C_2$ are the same as in Theorem \ref{thm:heat kernel estimate}.
\end{coro}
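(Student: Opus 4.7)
The plan is to combine the spectral expansion of $p_t$ on a finite graph with the on-diagonal pointwise bound from Theorem~\ref{thm:heat kernel estimate} applied at the single bridging time $t=D^2$. Let $\{\phi_k\}_{k=1}^N$ be an $\ell^2(V,m)$-orthonormal eigenbasis of $\Delta$ with eigenvalues $0=\lambda_1<\lambda_2\leq\cdots\leq\lambda_N$; connectedness of $G$ forces $\phi_1\equiv V^{-1/2}$, so writing
\[
q_t(x,y):=p_t(x,y)-\frac{1}{V}=\sum_{k\geq 2}e^{-\lambda_k t}\phi_k(x)\phi_k(y)
\]
isolates exactly the part that should decay with the spectral gap $\lambda_2$.

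I would proceed in three short steps. First, a Cauchy--Schwarz inequality applied to the spectral sum gives $|q_t(x,y)|^2 \leq q_t(x,x)\,q_t(y,y)$. Second, since $\lambda_k\geq\lambda_2$ for all $k\geq 2$, rewriting
\[
q_t(x,x)=\sum_{k\geq 2}e^{-\lambda_k(t-D^2)}\,e^{-\lambda_k D^2}\phi_k(x)^2
\]
yields the elementary monotonicity $q_t(x,x)\leq e^{-\lambda_2(t-D^2)}q_{D^2}(x,x)$, and symmetrically for $y$. Third, I bound $q_{D^2}(x,x)$ using Theorem~\ref{thm:heat kernel estimate}. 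Two features of the finite case make this clean: the greatest lower bound of the $\ell^2$-spectrum is $\mu=0$, so the factor $e^{-(1-\gamma)\mu t}$ is trivial, and at $t=D^2$ the ball $B_x(\sqrt t)=B_x(D)$ equals the whole vertex set, so $m(B_x(\sqrt t))=V$. Plugging $y=x$ and $d(x,x)=0$ into Theorem~\ref{thm:heat kernel estimate} then gives $p_{D^2}(x,x)\leq (C_1/V)\exp(C_2\sqrt{Kn}\,D)$, hence $q_{D^2}(x,x)\leq V^{-1}(C_1\exp(C_2\sqrt{Kn}\,D)-1)$.

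Chaining the three estimates produces the claimed inequality, with the prefactor $\exp(\lambda_2 D^2-\lambda_2 t)$ supplied by the monotonicity step. The only point to check is that the bridging time $D^2$ meets the hypothesis $t\geq \beta d(x,y)\vee 1$ of Theorem~\ref{thm:heat kernel estimate} at $y=x$, which reduces to $D^2\geq 1$ and holds for any graph with at least one edge; the single-vertex case makes both sides of the corollary trivially zero. No new graph-theoretic estimate beyond Theorem~\ref{thm:heat kernel estimate} is required, so I do not anticipate a serious obstacle; the whole argument is a compact spectral packaging of that theorem, with the choice of bridging time $t=D^2$ being the one substantive decision (it is just large enough that the ball covers all of $V$ and just small enough to let the $e^{-\lambda_2(t-D^2)}$ factor carry the asymptotic decay).
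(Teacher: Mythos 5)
Your proposal is correct and follows essentially the same route as the paper's proof: spectral expansion of $p_t-1/V$, the observation that $e^{\lambda_2 t}h_t(x,x)$ is nonincreasing (your monotonicity step), Cauchy--Schwarz to pass from off-diagonal to diagonal, and the on-diagonal bound from Theorem~\ref{thm:heat kernel estimate} at the bridging time $t=D^2$. The only cosmetic difference is the order in which you apply Cauchy--Schwarz and the monotonicity; your explicit remarks that $\mu=0$ on a finite graph and that $B_x(\sqrt{D^2})=V$ are implicit in the paper but are worth spelling out as you did.
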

\begin{proof}We follow the proof on manifolds \cite{Lee95}.
For the heat kernel we have the well-known eigenfunction expansion \cite{Chung97} $$p_t(x,y)=\sum_{i=1}^{N}e^{-\lambda_i t}\phi_i(x)\phi_i(y),$$ where $\{\phi_i\}_{i=1}^{N}$ is a complete set of orthonormal eigenfunctions of the Laplacian, i.e.
$$\sum_{x\in V}m(x)\phi_i(x)\phi_j(x)=\delta_{ij}.$$
Since the graph $G$ is finite, $\lambda_1=0$ and $\phi_1=\frac{1}{\sqrt V}.$ For simplicity we define  $h_t(x,y):=p_t(x,y)-\frac{1}{V}$. Then $h_t$ is given by
\begin{eqnarray*}h_t(x,y)&=&\sum_{i=2}^{N}e^{-\lambda_i t}\phi_i(x)\phi_i(y)\\
&=&e^{-\lambda_2 t}\sum_{i=2}^{N} e^{(\lambda_2-\lambda_i)t}\phi_i(x)\phi_i(y).
\end{eqnarray*}
Multiplying through by $e^{\lambda_2t}$ we see that  $h_t(x,x)e^{\lambda_2 t}$ is nonincreasing in $t.$
Using the heat kernel estimate Theorem \ref{thm:heat kernel estimate} for $x=y$ and $t=D^2,$ we get
$$p_{D^2}(x,x)\leq \frac{C_1}{V}\exp\left(C_2\sqrt{Kn}D\right).$$ Since
$h_t(x,x)e^{\lambda_2 t}$ is nonincreasing in $t$, this yields
\begin{eqnarray*}h_t(x,x)e^{\lambda_2 t}&\leq& h_{D^2}(x,x) e^{\lambda_2 D^2}\\
&\leq&\frac{1}{V}\left(C_1\exp\left(C_2\sqrt{Kn}D\right)-1\right)e^{\lambda_2 D^2}, \ \ \ \forall\ t\geq D^2.
\end{eqnarray*}
Using Cauchy-Schwartz inequality, we get
\begin{eqnarray*}h_t(x,y)^2&=&\left(\sum_{i=2}^{N}e^{-\lambda_it}\phi_i(x)\phi_i(y)\right)^2\\
&\leq&\left(\sum_{i=2}^{N}e^{-\lambda_i t}\phi_i^2(x)\right)\left(\sum_{i=2}^{N}e^{-\lambda_{i}t}\phi_i^2(y)\right)\\
&=&h_t(x,x)h_t(y,y).
\end{eqnarray*}
This implies that
$$|h_t(x,y)|\leq \frac{1}{V}\left(C_1 \exp\left(C_2\sqrt{Kn}D\right)-1\right)e^{\lambda_2D^2-\lambda_2t}.$$
This proves the corollary.
\end{proof}
\subsection{Eigenvalue estimates}
For a compact Riemannian manifold $M$, Chung, Grigor'yan and Yau \cite{Chung-Grigoryan-Yau96} showed by using the DGG Lemma \ref{DGGManifold} that the smallest positive Neumann eigenvalue of the Laplacian satisfies
\begin{equation}\label{CGY} \lambda_2 \leq \frac{C_1}{d(X,Y)^2}\left(\log\frac{C_2\mathrm{vol}(M)}{\sqrt{\mathrm{vol} (X)\mathrm{vol} (Y)}}\right)^2,\end{equation} where $X,Y$ are two disjoint subsets of $M$. Later on the constants $C_1$ and $C_2$ were improved  \cite{Chung-Grigoryan-Yau97, Bobkov97, friedman00} by other methods. Moreover in their papers \cite{Chung-Grigoryan-Yau96,Chung-Grigoryan-Yau97} Chung, Grigor'yan and Yau obtained similar but weaker estimates for graphs that are of order $1/d$ instead of $1/d^2$. It was an open question whether the eigenvalue estimates on graphs can be improved and similar results to those on Riemannian manifolds can be obtained. Friedman and Tillich \cite{FriedmanTillich04} observed that this improvement is indeed possible. Their strategy was to use the strong estimates on manifolds and transfer them in a clever way to the graph setting. Here as an application of the DGG Lemma, we give a direct proof of the $1/d^2$ estimate for graphs that is purely discrete and does not use the results on manifolds.  However we have to point out that our proof that follows \cite{Chung-Grigoryan-Yau96} yields worse constants than the results in \cite{FriedmanTillich04}. We also note that higher order eigenvalue estimates similar to \eqref{CGY} are known on manifolds and graphs, \cite{Chung-Grigoryan-Yau96,Chung-Grigoryan-Yau97, FriedmanTillich04}.
\begin{thm}\label{thm:eigenvalue estimates} Let $G$ be a finite graph on $N$ vertices and order the eigenvalues of $G$ in the nondecreasing way: $0=\lambda_1 <\lambda_2\leq \ldots \leq \lambda_{N}.$ Let $A_1,A_2,\cdots, A_k$ be $k$ disjoint subset on $G$ and  $$\delta:=\min_{i\neq j}d(A_i,A_j).$$ Then
\begin{equation}\label{e:our eigenvalue estimate}\lambda_k\leq \frac{D_m}{\delta}\max_{i\neq j}\frac{\log \frac{2m(V)}{\sqrt{m(A_i)m(A_j)}}}{h \left(\frac{2}{\delta}\log \frac{2m(V)}{\sqrt{m(A_i)m(A_j)}}\right)},\end{equation}
\end{thm}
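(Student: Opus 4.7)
I plan to follow the strategy of Chung--Grigor'yan--Yau \cite{Chung-Grigoryan-Yau96}, with Theorem~\ref{thm:Davies} in the case $\gamma=1$ playing the role of the continuous DGG lemma. Let $\{\phi_\ell\}_{\ell=1}^{N}$ be an orthonormal eigenbasis of the graph Laplacian with eigenvalues $0=\lambda_1<\lambda_2\leq\ldots$ and $\phi_1\equiv m(V)^{-1/2}$, and set
\[
p_{ij}(t):=\sum_{x\in A_i,\,y\in A_j}p_t(x,y)\,m(x)m(y)=\sum_{\ell=1}^{N}e^{-\lambda_\ell t}\,a_{i,\ell}\,a_{j,\ell},
\]
with $a_{i,\ell}:=\sum_{x\in A_i}\phi_\ell(x)m(x)$. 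Retaining only the $\ell=1$ term gives the universal \emph{diagonal} lower bound $p_{ii}(t)\geq a_{i,1}^2=m(A_i)^2/m(V)$, while Theorem~\ref{thm:Davies} applied with $d(A_i,A_j)\geq\delta$ gives the \emph{off-diagonal} upper bound
\[
p_{ij}(t)\leq \sqrt{m(A_i)m(A_j)}\exp\!\bigl(-\tfrac{1}{2}\zeta(D_m t,\delta)\bigr),\qquad i\neq j.
\]

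Next I would invoke Cauchy interlacing. The indicators $\psi_i:=\mathds{1}_{A_i}$ satisfy $\langle\psi_i,\psi_j\rangle=m(A_i)\delta_{ij}$ by disjointness, so $\{\psi_i/\sqrt{m(A_i)}\}_{i=1}^{k}$ is an orthonormal basis of a $k$-dimensional subspace $V\subset\ell^2(V,m)$, on which the heat semigroup is represented by the $k\times k$ positive semidefinite matrix $M_{ij}(t):=p_{ij}(t)/\sqrt{m(A_i)m(A_j)}$. Its smallest eigenvalue $\mu_k(t)$ interlaces with the $k$-th largest eigenvalue $e^{-\lambda_k t}$ of the full semigroup, so
\[
\lambda_k\leq -\frac{1}{t}\log\mu_k(t)=-\frac{1}{t}\log\min_{\|c\|_2=1}\sum_{i,j}c_ic_jM_{ij}(t).
\]
Combining the diagonal lower bound, the off-diagonal upper bound, and the Cauchy--Schwarz estimate $\sum_{i\neq j}|c_ic_j|\leq (k-1)\|c\|_2^2$ yields
\[
\mu_k(t)\geq \min_i\frac{m(A_i)}{m(V)}-(k-1)\exp\!\bigl(-\tfrac{1}{2}\zeta(D_m t,\delta)\bigr).
\]

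To close the argument I would balance the two contributions. Set $L_{ij}:=\log(2m(V)/\sqrt{m(A_i)m(A_j)})$ and pick $t$ as the solution of $\zeta(D_m t,\delta)=2L_{i^*j^*}$ for the pair $(i^*,j^*)$ maximising $L_{ij}$; this is possible because $\zeta(\tau,\delta)$ is continuous and monotonically decreasing in $\tau$ by Lemma~\ref{l:convexity}. For this choice the off-diagonal factor becomes $\sqrt{m(A_{i^*})m(A_{j^*})}/(2m(V))$, which is only a fraction of the diagonal reference, so $\mu_k(t)$ is bounded below by a definite positive multiple of $\sqrt{m(A_{i^*})m(A_{j^*})}/m(V)$, whence $\lambda_k\leq L_{i^*j^*}/t$ up to absolute constants. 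The function $h$ then arises by inverting the Legendre associate: the homogeneity $\zeta(\tau,d)=d\,\zeta(\tau/d,1)$ recasts $\zeta(D_m t,\delta)=2L$ as $\zeta(D_m t/\delta,1)=2L/\delta$, so that $D_m t=\delta\,h(2L/\delta)$ with $h$ the inverse of $\tau\mapsto\zeta(\tau,1)$. Substituting this into $\lambda_k\leq L_{i^*j^*}/t$ produces the stated bound.

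The main obstacle is the balancing step: one must ensure that for the chosen $t$ the ground-state diagonal truly dominates the signed off-diagonal mass, so that $\mu_k(t)$ is a definite positive quantity comparable to $\sqrt{m(A_{i^*})m(A_{j^*})}/m(V)$. The factor $2$ inside the logarithm defining $L_{ij}$ is introduced exactly so that the off-diagonal contribution sits at half of the diagonal reference value; the $(k-1)$ loss from Cauchy--Schwarz may require an additional logarithmic adjustment (absorbed into constants within $h$) or a more refined pairing argument, but the overall structure is the discrete counterpart of \cite{Chung-Grigoryan-Yau96}.
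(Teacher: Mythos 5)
Your setup (eigenfunction expansion, DGG off-diagonal upper bound, and the observation that the ground-state coefficients give the diagonal reference $m(A_i)/m(V)$) matches the paper, but the route through Cauchy interlacing plus the Cauchy--Schwarz bound $\sum_{i\neq j}|c_ic_j|\leq (k-1)\|c\|^2$ does not prove the stated inequality. Two concrete losses occur. First, the factor $(k-1)$ forces you to choose $t$ from $\zeta(D_mt,\delta)=2\log\bigl(2(k-1)m(V)/\min_i m(A_i)\bigr)$ rather than from $\zeta(D_mt,\delta)=2\log\bigl(2m(V)/\sqrt{m(A_i)m(A_j)}\bigr)$; since $L/h(L/\delta)$ is strictly increasing in $L$, the extra $\log(k-1)$ is not a harmless constant, it is not present in the theorem, and it cannot be ``absorbed into $h$'' (which is the fixed inverse of $\zeta(\cdot,1)$). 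Second, even at $k=2$ the Gershgorin-type lower bound only produces $\min_i m(A_i)$ in the logarithm, whereas the theorem has the strictly larger $\sqrt{m(A_1)m(A_2)}$, and $\sqrt{m(A_1)m(A_2)}\geq \min_i m(A_i)$.

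The paper avoids both losses by not bounding $\mu_k(t)$ at all. Instead it works directly with one off-diagonal sum $\sum_{x\in A_j}\sum_{y\in A_l}p_t(x,y)m(x)m(y)$ and splits the eigenfunction expansion into the ground state $a_j^1a_l^1=m(A_j)m(A_l)/m(V)$, the middle modes $\sum_{i=2}^{k-1}e^{-\lambda_i t}a_j^ia_l^i$, and the tail $\sum_{i\geq k}e^{-\lambda_it}a_j^ia_l^i$, with the tail bounded in magnitude by $e^{-\lambda_kt}\sqrt{m(A_j)m(A_l)}$ via Cauchy--Schwarz on coefficients. The decisive ingredient you gesture at but do not supply is the Chung--Grigor'yan--Yau pairing lemma: the $k$ vectors $X_j=(a_j^2,\dots,a_j^{k-1})$ live in $\R^{k-2}$, so for the inner product $\langle x,y\rangle_{t_0}=\sum_i e^{-\lambda_{i+1}t_0}x_iy_i$ there must exist $j_0\neq l_0$ with $\langle X_{j_0},X_{l_0}\rangle_{t_0}\geq 0$. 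For that pair the middle block can simply be dropped, yielding
\[
e^{-\lambda_k t_0}\geq \frac{\sqrt{m(A_{j_0})m(A_{l_0})}}{m(V)}-e^{-\frac12\zeta(D_mt_0,\delta)}
\]
with no $(k-1)$ factor and with $\sqrt{m(A_{j_0})m(A_{l_0})}$ rather than a minimum; taking the $\max_{i\neq j}$ on the right of \eqref{e:our eigenvalue estimate} is exactly what is needed to cover the a priori unknown pair $(j_0,l_0)$. You should replace the interlacing/Gershgorin step with this pairing argument; the remainder of your plan (the DGG bound, the homogeneity $\zeta(t,d)=d\,\zeta(t/d,1)$, the inversion via $h$, and the balancing choice of $t_0$) then goes through as written.
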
 where $h(t)$ is the inverse function of $\zeta(t,1).$
\begin{rem}\label{rem: eigenvalue estimates}\begin{enumerate}[(a)]
\item Note that in the Riemannian case, $\zeta(t,d)$ corresponds to $\frac{d^2}{2t},$ and $h(t)$ to $\frac{1}{2t}.$
\item Using the properties \eqref{e:zeta estimate} of $\zeta(t,1)$ it is easy to see that
$$h(t) \geq \sigma \arcsinh(\sigma^{-1}) \frac{1}{2t},\ \  \text{ for }  t \leq \frac{1}{2} \arcsinh(\sigma^{-1}).$$ Thus, if we choose
\begin{equation}\label{e: sigma}\sigma = \left(\sinh\left(\frac{4}{\delta}\max_{i\neq j}\log\frac{2m(V)}{\sqrt{m(A_i)m(A_j)}}\right)\right)^{-1},\end{equation} then
  \begin{equation}\label{Eigenvalue estimate}\lambda_k\leq \frac{4D_m}{\sigma\arcsinh(\sigma^{-1})\delta^2}\max_{i\neq j}\left(\log \frac{2m(V)}{\sqrt{m(A_i)m(A_j)}}\right)^2.\end{equation} Note that if
$$\frac{4}{\delta}\max_{i\neq j}\log\frac{2m(V)}{\sqrt{m(A_i)m(A_j)}}<<1,$$ then we can choose $\sigma$ such that $\sigma\arcsinh(\sigma^{-1})\approx 1$. Moreover, since $\delta\geq1$ we can always define $\sigma$ independently of $\delta$ by replacing $\delta$ by $1$ in \eqref{e: sigma}.
\end{enumerate}
\end{rem}

\begin{proof}[Proof of Theorem \ref{thm:eigenvalue estimates}] Using the DGG Lemma \ref{thm:Davies} we can follow closely the proof of \cite[Theorem~1.1]{Chung-Grigoryan-Yau96} for Riemannian manifolds, see also \cite[Theroem~4.1]{Grigoryan99}.
Let $\{\phi_i\}_{i=1}^N$ be an orthonormal basis of $\ell^2(V,m)$ consisting of eigenfunctions pertaining to the eigenvalues $\{\lambda_i\}_{i=1}^N$ of the Laplacian $\Delta$.

For convenience, we divide the proof into two cases:

{\bf Case 1.} $k=2.$
The characteristic functions $\mathds{1}_{A_1}$ and $\mathds{1}_{A_2}$ can be expressed as (generalized Fourier expansion)
$$\mathds{1}_{A_1}=\sum_{i=1}^Na_i \phi_i, \ \text{ and } \mathds{1}_{A_2}=\sum_{i=1}^N b_i \phi_i,$$ where $a_i=(\mathds{1}_{A_1},\phi_i)_{\ell^2(V,m)}$ and $b_i=(\mathds{1}_{A_2},\phi_i)_{\ell^2(V,m)}.$ Obviously, $$\sum_{i=1}^Na_i^2=\|\mathds{1}_{A_1}\|_{\ell^2(V,m)}=m(A_1), \ \ \sum_{i=1}^N b_i^2=m(A_2).$$ In addition, by $\phi_1=\frac{1}{\sqrt{m(V)}},$ $$a_1=\frac{m(A)}{\sqrt{m(V)}}, \ \ b_1=\frac{m(B)}{\sqrt{m(V)}}.$$
Since $p_t(x,y)=\sum_{i=1}^Ne^{-\lambda_i t}\phi_i(x)\phi_i(y)$,
\begin{eqnarray*}\sum_{x\in A_1}\sum_{y\in A_2}p_t(x,y)m(x)m(y)&=&a_1b_1+\sum_{i=2}^Ne^{-\lambda_i t}a_ib_i\\
&\geq& a_1b_1-e^{-\lambda_2 t}\left(\sum_{i=2}^N a_i^2\right)^{\frac12}\left(\sum_{i=2}^N b_i^2\right)^{\frac12}\\
&\geq& \frac{m(A_1)m(A_2)}{m(V)}-e^{-\lambda_2 t}\sqrt{m(A_1)m(A_2)}.
\end{eqnarray*}
By \eqref{e: Davies gamma=1} in DGG Lemma, we have
$$e^{-t\lambda_2}\geq \frac{\sqrt{m(A_1)m(A_2)}}{m(V)}-e^{-\frac{1}{2}\zeta(D_mt, \delta)},$$ where $\delta=d(A_1,A_2).$
Note that for any $d>0,$ $\zeta(t,d)$ is strictly nonincreasing in $t$, $\zeta(t,d)\to \infty,$ as $t\to 0$ and $\zeta(t,d)\to 0,$ as $t\to \infty.$

By choosing $t$ such that $$e^{-\frac12 \zeta(D_mt, \delta)}=\frac{1}{2}\frac{\sqrt{m(A_1)m(A_2)}}{m(V)},$$ we have
$$\lambda_2\leq \frac{1}{t}\log \frac{2m(V)}{\sqrt{m(A_1)m(A_2)}}.$$ By the homogeneity of $\zeta,$ $\zeta(t,d)=d\zeta(\frac{t}{d},1),$ and the definition of $h,$  we know that
given $d,a>0,$ the solution of $\zeta(t,d)=a$ is $t=dh(\frac{a}{d}).$ This implies that $$t=\frac{\delta}{D_m} h\left(\frac{2}{\delta}\log \frac{2m(V)}{\sqrt{m(A_1)m(A_2)}}\right).$$

Hence $$\lambda_2\leq \frac{D_m}{\delta}\frac{\log \frac{2m(V)}{\sqrt{m(A_1)m(A_2)}}}{h \left(\frac{2}{\delta}\log \frac{2m(V)}{\sqrt{m(A_1)m(A_2)}}\right)}.$$

{\bf Case 2.} $k>2.$
Using generalized Fourier expansion w.r.t. the orthonormal basis $\{\phi_i\},$ one has
$$\mathds{1}_{A_j}=\sum_{i=1}^Na_j^i \phi_i,\ \ j=1,\cdots,k.$$ where $a_j^i=(\mathds{1}_{A_j},\phi_i)_{\ell^2(V,m)}.$  By the same argument in Case 1, for any $1\leq j\neq l\leq k$ we have
\begin{eqnarray*}\sum_{x\in A_j}\sum_{y\in A_l}p_t(x,y)m(x)m(y)&=&a^1_ja_l^1+\sum_{i=2}^{k-1}e^{-\lambda_i t}a^i_ja^i_l+\sum_{i=k}^{N}e^{-\lambda_i t}a^i_ja^i_l\\
&\geq&\frac{m(A_j)m(A_l)}{m(V)}+\sum_{i=2}^{k-1}e^{-\lambda_i t}a^i_ja^i_l-e^{-\lambda_kt}\sqrt{m(A_j)m(A_l)}.
\end{eqnarray*}
Combining this with \eqref{e: Davies gamma=1} in the DGG Lemma, we obtain the following by direct calculation
\begin{equation}\label{e:for high eigenvalues}
e^{-\lambda_k t}\geq \frac{\sqrt{m(A_j)m(A_l)}}{m(V)}+\frac{1}{\sqrt{m(A_j) m(A_l)}}\sum_{i=2}^{k-1}e^{-\lambda_i t}a^i_ja^i_l-e^{-\frac12 \zeta(D_m t, d(A_j,A_l))}.
\end{equation}
We choose $t_0>0$ such that
\begin{equation}\label{e:high eigenvalues1}e^{-\frac12\zeta(D_mt_0,\delta)}=\frac12\min_{j\neq l} \frac{\sqrt{m(A_j)m(A_l)}}{m(V)},\end{equation} where $\delta=\max_{j\neq l}d(A_j,A_l).$ Using the inverse function, $h(t),$ of $\zeta(t,1),$ one finds that
\begin{equation}\label{e:high eigenvalues2}t_0=\frac{\delta}{D_m}\min_{j\neq l} h\left(\frac{2}{\delta}\log \frac{2m(V)}{\sqrt{m(A_j)m(A_l)}}\right).\end{equation} The reason for this choice of $t_0$ will be apparent soon.

We claim that there exists a pair $\{j_0,l_0\},$ $1\leq j_0\neq l_0\leq N,$ such that for $A_{j_0}$ and $A_{l_0}$ the second term on the right hand side of the equation \eqref{e:for high eigenvalues} is nonnegative. For this purpose, we consider an auxiliary vector space, $\R^{k-2},$ endowed with the inner product
$$\langle x,y \rangle_{t_0}=\sum_{i=1}^{k-2}e^{-\lambda_{i+1}t_0}x_iy_i, \ \ x,y\in \R^{k-2}.$$
We have $k$ vectors, $\left\{X_j=(a_j^2,a_j^3,\cdots,a_j^{k-1})\right\}_{j=1}^k$, in $\R^{k-2}.$ By a standard theorem in linear algebra, see \cite[Lemma~2]{Chung-Grigoryan-Yau96}, there exists a pair $1\leq j_0\neq l_0\leq k$ such that $\langle X_{j_0},X_{l_0}\rangle_{t_0}\geq 0,$ that is
$$\sum_{i=2}^{k-1}e^{-\lambda_i t}a^i_{j_0}a^i_{l_0}\geq 0.$$ This proves the claim.

For the pair $j_0$ and $l_0,$ it follows from \eqref{e:for high eigenvalues} with $t=t_0$ that
\begin{eqnarray*}e^{-\lambda_k t_0}&\geq& \frac{\sqrt{m(A_{j_0})m(A_{l_0})}}{m(V)}-e^{-\frac12 \zeta(D_m t_0, d(A_{j_0},A_{l_0}))}\\
&\geq& \min_{j\neq l}\frac{\sqrt{m(A_{j})m(A_{l})}}{m(V)}-e^{-\frac12 \zeta(D_m t_0, \delta)}\\
&=&\frac12 \min_{j\neq l}\frac{\sqrt{m(A_{j})m(A_{l})}}{m(V)},\end{eqnarray*}
where we use the monotonicity of $\zeta$ in $d$ in the second inequality and the property \eqref{e:high eigenvalues1} of $t_0$ for our choice in the last equality.
Combining this with \eqref{e:high eigenvalues2}, we prove the theorem.
\end{proof}
Finally, we will give an example to show the sharpness of the estimate of the order $1/\delta^2$ in Theorem \ref{thm:eigenvalue estimates}.
\begin{example}\label{Example sharp}\begin{enumerate}[1.]\item ($k=2$). For any $n\in \N,$ let $P_{4n+1}$ be a path graph identified with the induced subgraph $[-2n,2n]\cap \Z$ of $\Z.$ We choose $A_1=[-2n,-n]\cap \Z,$ $A_2=[n,2n]\cap \Z.$ Then $$\frac{2}{\delta}\log\left(\frac{2m(V)}{\sqrt{m(A_1)m(A_2)}}\right)\sim \frac{2\log8}{n}\ll1.$$ By our estimate \eqref{Eigenvalue estimate}, $\lambda_2\leq \frac{C}{n^2}\sim \frac{C}{\mathrm{diam} ^2}$ which is optimal for large $n$ since $\lambda_2 = 1 - \cos(
\frac{\pi}{n-1})$.
\item $(k>2).$ For $k$ copies of the path graph $[0,2n]\cap \Z,$ $\{G_l\}_{l=1}^k$, we glue the origins of $G_l$ together to get a star graph $G.$ By setting $A_l=[n,2n]\cap G_l$ for $1\leq l\leq k,$ our estimate \eqref{Eigenvalue estimate} implies that $\lambda_k\leq \frac{C}{n^2}$ which is known to be optimal.
\end{enumerate}
\end{example}
We briefly discuss some consequences of Theorem \ref{thm:eigenvalue estimates}.
\begin{coro}The diameter $D$ of a graph satisfies
$$D\leq 2 \left(\frac{D_m}{\sigma\arcsinh(\sigma^{-1})\lambda_2}\right)^{1/2}\log\left(\frac{2m(V)}{m_\mathrm{min}}\right),$$ where we can choose
$$\sigma = \left[\sinh\left(4\log\left(\frac{2m(V)}{m_\mathrm{min}}\right)\right)\right]^{-1}.$$
\end{coro}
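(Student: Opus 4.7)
The plan is to apply Theorem~\ref{thm:eigenvalue estimates} in the simplest possible case $k=2$ with the two singleton sets given by a pair of vertices realizing the diameter, and then solve the resulting inequality for $D$.

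First I would pick vertices $x,y\in V$ with $d(x,y)=D$ and set $A_1=\{x\}$, $A_2=\{y\}$. With this choice, $\delta=d(A_1,A_2)=D$, while $m(A_1),m(A_2)\geq m_{\mathrm{min}}$, so
\[
\sqrt{m(A_1)m(A_2)}\geq m_{\mathrm{min}},\qquad
\log\frac{2m(V)}{\sqrt{m(A_1)m(A_2)}}\leq \log\frac{2m(V)}{m_{\mathrm{min}}}.
\]
Then I would invoke the explicit second-form of the eigenvalue bound in Remark~\ref{rem: eigenvalue estimates}(b), namely \eqref{Eigenvalue estimate}, which says
\[
\lambda_2\leq \frac{4D_m}{\sigma\arcsinh(\sigma^{-1})\,\delta^2}
\max_{i\neq j}\left(\log\frac{2m(V)}{\sqrt{m(A_i)m(A_j)}}\right)^2.
\]

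Substituting $\delta=D$ and the bound above, I obtain
\[
\lambda_2\leq \frac{4D_m}{\sigma\arcsinh(\sigma^{-1})\,D^2}\left(\log\frac{2m(V)}{m_{\mathrm{min}}}\right)^2,
\]
and solving for $D$ gives exactly the stated inequality
\[
D\leq 2\left(\frac{D_m}{\sigma\arcsinh(\sigma^{-1})\lambda_2}\right)^{1/2}\log\left(\frac{2m(V)}{m_{\mathrm{min}}}\right).
\]

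The only subtle point is the choice of $\sigma$: the formula from Remark~\ref{rem: eigenvalue estimates}(b) involves $\delta$ in the denominator inside $\sinh$, but since we want a bound on $D$ that can be stated without circular dependence on $D$, I would invoke the second sentence of that remark, where it is observed that one may replace $\delta$ by $1$ (using $\delta\geq 1$). With this replacement, and using $m(A_i)m(A_j)\geq m_{\mathrm{min}}^2$, the natural choice becomes $\sigma=\bigl[\sinh\bigl(4\log(2m(V)/m_{\mathrm{min}})\bigr)\bigr]^{-1}$, which matches the statement. There is no real obstacle here; the argument is essentially a specialization of Theorem~\ref{thm:eigenvalue estimates} combined with the decoupling of $\sigma$ from $\delta$ already observed in the remark.
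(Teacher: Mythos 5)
Your proof is correct and follows the same route as the paper: the paper's own proof is the one-line statement that the corollary ``follows immediately from Theorem~\ref{thm:eigenvalue estimates} and Remark~\ref{rem: eigenvalue estimates} by choosing $k=2$, $A_1=\{x\}$ and $A_2=\{y\}$ where $d(x,y)=D$,'' and you have simply spelled out that specialization, including the decoupling of $\sigma$ from $\delta$ via $\delta\geq 1$ and the replacement of $\sqrt{m(x)m(y)}$ by $m_{\min}$ (both of which only shrink $\sigma$ and so keep the condition $t\leq\tfrac12\arcsinh(\sigma^{-1})$ satisfied).
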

\begin{proof}
The proof follows immediately from Theorem \ref{thm:eigenvalue estimates} and Remark \ref{rem: eigenvalue estimates} by choosing $k=2$, $A_1 =\{x\}$ and $A_2=\{y\}$ where $d(x,y)=D$.
\end{proof}
Using Theorem \ref{thm:eigenvalue estimates} we can easily derive isoperimetric inequalities that improve and generalize earlier results in \cite{Chung-Grigoryan-Yau96, Alon85, Tanner84}. For a subset $U\subset V$, the $r$-neighborhood of $U$ is defined by
$$N_r(U) =\{x\in V: d(x,U)\leq r\}.$$
\begin{coro}We have the following lower bound for the size of the $r$-neighborhood of a subset $U\subset V,$ $r\geq 1$
$$m(N_r(U))\geq m(V)\left(1 - \frac{4m(V)}{m(U)}\exp\left(-(r+1)\sqrt{\frac{\lambda_2 \sigma\arcsinh(\sigma^{-1})}{D_m}}\right)\right),$$
 where we can choose $$\sigma= \left(\sinh\left(2\log\frac{2m(V)}{\sqrt{m(U)m_\mathrm{min}}}\right)\right)^{-1}.$$\end{coro}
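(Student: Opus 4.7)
The plan is to reduce this to Theorem \ref{thm:eigenvalue estimates} (the case $k=2$) by partitioning $V$ into $U$ and its complement of the $r$-neighborhood. Set $W := V\setminus N_r(U)$. If $W=\emptyset$, then $m(N_r(U))=m(V)$ and the stated inequality is trivial because the parenthesized factor is at most $1$. Otherwise, $U$ and $W$ are disjoint nonempty subsets with $\delta := d(U,W)\geq r+1$ and $m(W)\geq m_{\mathrm{min}}$.

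Next I would apply the sharpened form \eqref{Eigenvalue estimate} from Remark \ref{rem: eigenvalue estimates}(b) with $A_1=U$, $A_2=W$, for a $\sigma$ to be chosen below. This yields
\[
\lambda_2 \;\leq\; \frac{4D_m}{\sigma\arcsinh(\sigma^{-1})\,\delta^2}\left(\log\frac{2m(V)}{\sqrt{m(U)m(W)}}\right)^{2}.
\]
Solving for the logarithmic factor and using $\delta\geq r+1$ gives
\[
\log\frac{2m(V)}{\sqrt{m(U)m(W)}} \;\geq\; \frac{r+1}{2}\sqrt{\frac{\lambda_2\,\sigma\arcsinh(\sigma^{-1})}{D_m}},
\]
and exponentiating and squaring yields
\[
m(W) \;\leq\; \frac{4m(V)^{2}}{m(U)}\exp\!\left(-(r+1)\sqrt{\frac{\lambda_2\,\sigma\arcsinh(\sigma^{-1})}{D_m}}\right).
\]
Since $m(N_r(U)) = m(V)-m(W)$, dividing by $m(V)$ gives precisely the claimed inequality.

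The one subtle point, which I regard as the main obstacle, is justifying that the $\sigma$ displayed in the corollary is a valid choice in Remark \ref{rem: eigenvalue estimates}(b); the remark requires $\sigma^{-1}\geq \sinh\!\bigl(\tfrac{4}{\delta}\log\tfrac{2m(V)}{\sqrt{m(U)m(W)}}\bigr)$, and both $\delta$ and $m(W)$ depend on the unknown $W$. I would resolve this using the a~priori bounds $\delta\geq r+1\geq 2$ (from the assumption $r\geq 1$ and the integrality of the graph distance) and $m(W)\geq m_{\mathrm{min}}$, which together give
\[
\frac{4}{\delta}\log\frac{2m(V)}{\sqrt{m(U)m(W)}} \;\leq\; 2\log\frac{2m(V)}{\sqrt{m(U)\,m_{\mathrm{min}}}},
\]
so that the choice $\sigma=\bigl(\sinh(2\log\tfrac{2m(V)}{\sqrt{m(U)m_{\mathrm{min}}}})\bigr)^{-1}$ is admissible uniformly in $W$. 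With $\sigma$ so fixed, the bound derived above is valid and the proof is complete.
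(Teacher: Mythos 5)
Your proof is correct and is essentially the paper's argument: the paper's proof is the one-line statement that the corollary follows from Theorem \ref{thm:eigenvalue estimates} and Remark \ref{rem: eigenvalue estimates} with $k=2$, $A_1=U$, $A_2=V\setminus N_r(U)$, which is precisely your reduction. Your careful verification that the displayed $\sigma$ is admissible (using $\delta\geq r+1\geq 2$ and $m(W)\geq m_{\mathrm{min}}$ to get $\frac{4}{\delta}\log\frac{2m(V)}{\sqrt{m(U)m(W)}}\leq 2\log\frac{2m(V)}{\sqrt{m(U)m_{\mathrm{min}}}}$) fills in a detail the paper leaves implicit, and is exactly the right point to check.
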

\begin{proof}
The proof follows immediately from Theorem \ref{thm:eigenvalue estimates} and Remark \ref{rem: eigenvalue estimates} by choosing $k=2$, $A_1 =U$ and $A_2=V\setminus N_r(U)$.
\end{proof}
\bibliography{Literature}
\bibliographystyle{alpha}

\end{document}